\newtheorem{thm}{Theorem} 
\newtheorem{lemma}{Lemma} 
\newtheorem{propo}{Proposition} 
 \newtheorem{coro}{Corollary}
\definecolor{red1}{rgb}{1,0.9,0.9} \definecolor{blue1}{rgb}{0.9,0.9,1} \definecolor{green1}{rgb}{0.9,1,0.9} 
\definecolor{yellow1}{rgb}{1,1,0.9} \definecolor{yellow2}{rgb}{1,1,0.8}
\let\paragraph\subsection
\newcommand{\NN}{\mathbb{N}}
\newcommand{\RR}{\mathbb{R}}
\newcommand{\CC}{\mathbb{C}}
\newcommand{\Zeta}{\mathrm{Z}}
\title{On Helmholtz free energy for finite abstract simplicial complexes}
\author{Oliver Knill}
\date{Mar 18, 2017}
\address{Department of Mathematics \\ Harvard University \\ Cambridge, MA, 02138 }
\subjclass{Primary: 05E45, 
           94A17, 
           82Bxx, 
           Secondary: 31C20, 
           58K35  
          }
\keywords{Discrete Gauss Bonnet, Shannon Entropy, Newton potentials, McKean-Singer,
          Euler characteristic, Helmholtz Free energy}
\begin{document}
\maketitle

\begin{abstract}
We prove first that for the Barycentric refinement $G_1$ 
of a finite abstract simplicial complex $G$, the Gauss-Bonnet formula 
$\chi(G) = \sum_x K^+(x)$ holds, where $K^+(x)=(-1)^{\dim(x)} (1-\chi(S(x)))$
is the curvature of a vertex $x$ with unit sphere $S(x)$ in the graph $G_1$. 
This curvature is dual to $K^-(x)=(-1)^{\dim(x)}$ for which Gauss-Bonnet is the definition of
Euler characteristic $\chi(G)$. 
Because the connection Laplacian $L'=1+A'$ of an abstract simplicial complex $G$ is unimodular, 
where $A'$ is the adjacency matrix of the connection graph $G'$, 
the Green function values $g(x,y) = (1+A')^{-1}_{xy}$ are integers and $1-\chi(S(x))=g(x,x)$.
Gauss-Bonnet for $K^+$ reads therefore as ${\rm str}(g)=\chi(G)$, where ${\rm str}$ is the super trace. 
As $g$ is a time-discrete heat kernel, this is a cousin to McKean-Singer ${\rm str}(e^{-Lt}) = \chi(G)$ 
for the Hodge Laplacian $L=(d+d^*)^2$ which lives on the same Hilbert space than $L'$. 
Both formulas hold for an arbitrary finite abstract simplicial complex $G$.
Writing $V_x(y)= g(x,y)$ for the Newtonian potential of the connection Laplacian,
we prove $\sum_y V_x(y) = K^+(x)$, so that by the new Gauss-Bonnet formula, 
the Euler characteristic of $G$ agrees with the total potential theoretic
energy $\sum_{x,y} g(x,y)=\chi(G)$ of $G$. The curvature $K^+$ now relates to the probability 
measure $p$ minimizing the internal energy $U(p)=\sum_{x,y} g(x,y) p(x) p(y)$ of the complex. 
Since both the internal energy (here linked to topology) and Shannon entropy are natural
and unique in classes of functionals, we then look at critical points $p$ the 
Helmholtz free energy $F(p)=\beta U(p)-T S(p)$ which combines
the energy functional $U$ and the entropy functional $S(p)=-\sum_x p(x) \log(p(x))$. 
As the temperature $T=1-\beta$ changes, we observe bifurcation phenomena. 
Already for $G=K_3$ both a saddle node bifurcation and a pitchfork bifurcation occurs.
The saddle node bifurcation leads to a catastrophe:
the function $\beta \to F(p(\beta),\beta)$ is discontinuous if $p(\beta)$ is a free energy minimizer.
\end{abstract}

\section{Introduction}

\paragraph{}
To every geometry with a Laplacian belongs a {\bf Newtonian potential theory}. The
prototype was developed by Gauss for the Laplacian $(4\pi)^{-1} \Delta$ in $\RR^3$, where the Green
function $g(x,y)=V_x(y) = -1/|x-y|$ defines the familiar Newton potential making its appearance 
in classical gravity and electro statics. 
As calculus shows,
the Gauss law ${\rm div}(F) = d^* d V = {\rm div}({\rm grad}(V)) = \Delta V= \mu$ 
determines the gravitational potential $V$ of a mass distribution $\mu$ and the gravitational 
force $F = {\rm grad}(V)$. An other important classical case with applications in vortex dynamics
or complex dynamics or spectral theory of self adjoint operators is the {\bf logarithmic potential} 
$g(x,y)=V_x(y) = \log|x-y|$ associated to the Laplacian $(2\pi)^{-1} \Delta$ in $\RR^2$. 
Given a measure $\mu$, it defines the energy $I(\mu)= \int_\CC \int_\CC \log|x-y| \mu(x) d\mu(y)$. 
The logarithmic capacity of $K \subset \CC$ is then the minimum of $e^{-I(\mu)}$ over all 
probability measures $\mu$ supported on $K$. 
For discrete measures $\mu$, one usually disregards the self interaction.
One can then minimize $I(\mu) = \int_{x \neq y} \log|x-y| \mu(x) d\mu(y)
= \log \prod_{\lambda_j \neq \lambda_k} |\lambda_j-\lambda_k|$, when $\mu$ ranges over all discrete 
probability measures. In a spectral or random matrix setting, this energy appears as a van 
der Monde determinant. In one dimensions, the potential energy is in statistics 
known as the {\bf Gini index} $I(\mu) = \int_{\RR} \int_{\RR} |x-y| d\mu(x) d\mu(y)$ 
because $V_x(y)=|x-y|$ is the natural Newton potential to the Laplacian 
$(1/2) \Delta = (1/2) d^2/dx^2$ on the real line. 

\paragraph{}
A finite simplicial complex $G$ carries an exterior derivative $d$ given as an 
incidence matrix. It defines the {\bf Hodge Laplacian} $L=(d+d^*)^2$ and has so a Newtonian potential 
theory. But as in the manifold case, we have to deal with singularities, as this Laplacians is not
invertible. Indeed, both for compact manifolds as well as for finite complexes, 
the kernels of of the blocks $L_k$ of $L$ consist of harmonic forms which by Hodge 
are as vector spaces isomorphic to the $k$'th cohomology $H^k(G)$ of the simplicial complex $G$. 
While this is topologically interesting, the regularization 
via a pseudo inverse renders the individual entries less likely to be of
topological interest. By the spectral theorem for selfadjoint matrices, using an 
orthonormal eigenbasis $\psi_k$ the psudeo inverse can be written as
$$ g(x,y) = \sum_{\lambda_k \neq 0} \frac{\psi_k(x) \psi_k(y)}{\lambda_k} \; . $$
And even if regularized by restricting $L$ to the orthogonal complement of the kernel, 
the Green functions $g(x,y)$ of successive Barycentric refinements
$G_n$ explode as there is no spectral gap at $0$ in the limit $G_{\infty}$. 

\paragraph{}
While the dynamical importance of the inverse Hodge Laplacian $L$ is evident, the Green function
values are just real numbers, and even if existent, a topological connection would be 
difficult to detect. We have in vain tried to associate the {\bf Hodge energy} $\sum_{x,y} L^{-1}(x,y)$ 
defined by the Hodge Laplacian $L$ with anything topological of $G$. 
The situation completely changes for the {\bf connection Laplacian} $L'=1+A'$, for which
the Green function entries $g(x,y) = (1+A')^{-1}_{xy}$ are integers. 
The diagonal entries are Poincar\'e-Hopf type indices and can also be seen as a generalized 
genus of a unit sphere $S(x)$ and related to curvatures $K^+(x) (-1)^{{\rm dim}(x)}$. 
The set of these values does not change any more under further Barycentric refinements 
and are therefore combinatorial invariants. 

\paragraph{}
This note is 
a continuation of \cite{Unimodularity,Spheregeometry} and was obtained by studying the topological 
nature of $g(x,y)$ for $x \neq y$. 
There is still an enigma about these off-diagonal entries.
Since the diagonal entries $g(x,x)$ are topological and given 
by $g(x,x) = 1-\chi(S(x))$, where $S(x)$ is the unit sphere of $x$ in $G_1$, we expect also 
all other entries to have topological interpretation. There are some indication that this
is so as we see that the intersection of unstable manifolds $W^+(x)$ and $W^+(x)$ for the gradient 
flow of the dimension functional on $G_1$ needs to be non-empty. As we will see in the proof of our
main result $\sum_{x,y} g(x,y) = \chi(G)$, the entries $g(x,y)$ are closely related to 
Poincar\'e-Hopf indices which appear here as curvatures. It was the discovery of this 
identity ``energy = Euler characteristic" which led us to look into the 
thermodynamic branch of the story. 

\paragraph{}
Diving into the potential theoretical aspect we establish new relations for the Green function values. 
If the potential energy $\sum_{x,y} g(x,y)$ is seriously considered to be 
an internal energy in the sense of physics (like the energy of a finite vortex configuration in the case of 
the Laplacian in $\RR^2$ or the energy of a finite electron configuration in the case of the Laplacian in $\RR^3$),
this naturally leads to thermodynamics and in particular to questions usually studied in 
statistical mechanics. When considering arbitrary simplicial complexes $G$, it would 
be natural to look at the Barycentric refinement limits $G_{\infty}$ and hope that the limiting 
case leads to situations which are universal at critical parameters. On a spectral level there is 
some universality already in the sense that the law of the complex, the density of states converges
universally to a limit which only depends on dimension \cite{KnillBarycentric,KnillBarycentric2}
The Barycentric limit replaces the van-Hove limit in lattice gas models 
\cite{RuelleStatMech,SimonStatMechanics}. But as we will see, already the bifurcation story
for the Helmholtz free energy functional for a fixed network $G$ can be complicated  - 
without any thermodynamic limit.
For the Whitney complex $G$ of the graph $K_3$, the simplest two-dimensional complex, the
 operators $L,L'$ are $7 \times 7$ matrices and two important bifurcation cases known in 
dynamical systems theory appear. 

\section{Gauss Bonnet for $G_1$} 

\paragraph{}
Let $G$ be an abstract finite simplicial complex. This means that $G$ a collection of finite, non-empty sets closed
under the operation of taking non-empty subsets. It defines two finite simple graphs $G_1$ and $G'$ which 
both have the faces of $G$ as vertex set. In the Barycentric refinement $G_1$, two vertices 
are connected if one is contained
in the other, in $G'$ two vertices are connected if they intersect. If $A'$ is the adjacency matrix of $G'$
then the Fredholm connection Laplacian $L'=1+A'$ is unimodular so that the Green functions $g(x,y) = L'^{-1}_{xy}$
are integers. We know that $g(x,x) = 1-\chi(S(x))$, where $S(x)$ is the unit sphere of $x$ in $G_1$. 
The unit sphere $S(x)$ in $G_1$ the graph generated by the vertices in $G_1$ which have distance $1$ to $x$. 

\paragraph{}
As usual in potential theory, $V_x(y)=g(x,y)$
is the {\bf Newtonian potential} at $y$ if a unit mass is placed at $x$. If we place a unit mass at every vertex, then
$K^+(x)=\sum_y g(x,y)$ is the {\bf potential energy} at $x$. We call this the {\bf unstable curvature} at $x$. The stable
curvature is defined as $K^-(x)= (-1)^{\dim(x)}$ and relatively plain. But they are dual to each other as one belongs to 
$f(x)={\rm dim}(x)$ and the other to $f(x) =-{\rm dim}(x)$. The two curvatures agree if $G$ is discrete even dimensional 
manifold. Also, due to index averaging results, we will see that if $G$ was the Whitney complex of a graph $(V,E)$, 
then pushing either of them to the vertices gives the Euler curvature. 

\begin{thm}
For any finite abstract simplicial complex $G$, the Gauss-Bonnet formula 
$$\sum_x K^+(x) = \chi(G) \;  $$ 
holds, where the sum is taken over the faces of $G$. 
\end{thm}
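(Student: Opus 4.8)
The plan is to reduce the statement to a fact already announced in the excerpt, namely that $K^+(x) = \sum_y g(x,y)$ with $g = (L')^{-1}$ the Green function of the connection Laplacian, together with the identity $g(x,x) = 1 - \chi(S(x))$. Summing over all faces $x$ gives $\sum_x K^+(x) = \sum_{x,y} g(x,y)$, so the theorem is equivalent to the ``energy equals Euler characteristic'' identity $\sum_{x,y} g(x,y) = \chi(G)$. The first thing I would do, therefore, is establish the potential-theoretic identity $\sum_y g(x,y) = K^+(x) = (-1)^{\dim(x)}(1-\chi(S(x)))$; this is the claim $\sum_y V_x(y) = K^+(x)$ flagged in the abstract, and it is the real content.

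To prove $\sum_y g(x,y) = (-1)^{\dim(x)}(1-\chi(S(x)))$, I would argue combinatorially rather than by inverting a matrix. The row sum $\sum_y g(x,y)$ is the value at $x$ of the solution $u$ of $L' u = \mathbf{1}$, i.e. of $(1+A')u = \mathbf{1}$. Equivalently, writing $g = \sum_{k\ge 0}(-1)^k (A')^k$ on the formal level (made rigorous because $L'$ is unimodular and invertible over $\mathbb{Z}$), the quantity $\sum_y g(x,y)$ is a signed count of walks in the connection graph $G'$ starting at $x$. The cleaner route is to use unimodularity directly: by Cramer's rule $g(x,y) = (-1)^{?}\det(L'_{\hat y \hat x})/\det(L')$ and $\det(L') = \pm 1$, so $\sum_y g(x,y)$ is an alternating sum of minors of $L'$. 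I expect the key algebraic identity to be a Poincaré–Hopf / valuation statement: for each face $x$, the local quantity $\sum_y g(x,y)$ equals an index $i(x)$ attached to the gradient flow of the dimension functional on $G_1$, and these indices are exactly $(-1)^{\dim x}(1-\chi(S(x)))$ by the Poincaré–Hopf theorem for graphs. Concretely, I would invoke the structure of $L'$ as $1 + A'$ where $A'_{xy} = 1$ iff $x \cap y \neq \emptyset$ (as sets), and exploit that the closed star of a simplex in $G'$ is a cone, hence contractible, to collapse most contributions; the surviving boundary terms assemble into $1 - \chi(S(x))$.

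Granting $\sum_y g(x,y) = K^+(x)$, the remaining step is a global summation: $\sum_x K^+(x) = \sum_{x,y} g(x,y) = \mathbf{1}^{\mathsf T} g \, \mathbf{1} = \mathbf{1}^{\mathsf T} (L')^{-1} \mathbf{1}$, and I must identify this with $\chi(G)$. Here I would use the symmetry of $g$ together with the diagonal evaluation: first note $\sum_x g(x,x) = \sum_x (1-\chi(S(x)))$, and by the standard Gauss–Bonnet for the curvature $K^-$ (which is the definition of $\chi$) plus the known relation between $\chi(S(x))$ and the combinatorics of $G_1$, the trace and the full sum can be compared. Alternatively, and more robustly, I would deduce $\sum_{x,y} g(x,y) = \chi(G)$ from the cited unimodularity theorem of \cite{Unimodularity}: since $\det(L') = \prod_x (-1)^{\dim x} = (-1)^{\sum_x \dim x}$ is already tied to $\chi$ via an Euler-characteristic count of faces by dimension, the vector $\mathbf{1}$ paired through $(L')^{-1}$ picks out precisely $\sum_k (-1)^k f_k = \chi(G)$.

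The main obstacle I anticipate is the local identity $\sum_y g(x,y) = (-1)^{\dim x}(1-\chi(S(x)))$: computing an entire row sum of the inverse of the connection Laplacian is not a purely formal manipulation, and the honest proof seems to require either (i) an explicit combinatorial formula for $g(x,y)$ for $x \neq y$ in terms of the mutual intersection pattern of the faces $x,y$ — which the introduction itself calls an ``enigma'' — or (ii) a clever telescoping using contractibility of stars in $G'$ so that the row sum collapses to a Poincaré–Hopf index without ever identifying the individual off-diagonal entries. I would pursue route (ii), using induction on the number of faces or on dimension, with the base case a single simplex (where $G' $ is a complete graph on its $2^{d+1}-1$ faces and the row sums can be computed by hand), and the inductive step removing a maximal simplex and tracking how $g$ and $\chi(S(x))$ change.
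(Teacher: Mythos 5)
Your plan runs the paper's logic backwards and, in doing so, leaves the two hard steps unproven. In the paper the theorem is \emph{not} derived from the Green function identities; it is proved directly, and the identities $\sum_y g(x,y)=K^+(x)$ and $\sum_{x,y}g(x,y)=\chi(G)$ are then deduced \emph{from} it. The direct argument is the one you never consider: the unit sphere of a face $x$ in the Barycentric refinement $G_1$ splits as the Zykov join of two pieces, the faces properly contained in $x$ and the faces properly containing $x$; the functional $i(G)=1-\chi(G)$ is multiplicative under the join; the first piece is the boundary complex of the simplex $x$, a $(\dim(x)-1)$-sphere, so its $i$-value is exactly $(-1)^{\dim(x)}$. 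Hence $K^+(x)=(-1)^{\dim(x)}(1-\chi(S(x)))$ equals $1-\chi$ of the second piece, which is precisely the Poincar\'e-Hopf index of the locally injective function $f=-\dim$ on $G_1$, and the discrete Poincar\'e-Hopf theorem gives $\sum_x K^+(x)=\chi(G_1)=\chi(G)$. No matrix inversion is involved anywhere.

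By contrast, both pillars of your reduction are sketches with acknowledged holes, and the reduction itself is circular relative to the paper. For the row-sum identity $\sum_y g(x,y)=(-1)^{\dim x}(1-\chi(S(x)))$ you rightly flag that computing a row sum of $(1+A')^{-1}$ is not formal, but neither of your routes (i) or (ii) is carried out; the paper's actual proof of this identity rewrites it as $(1+A')K^+=\mathbf{1}$ and then needs exactly the Poincar\'e-Hopf identification of $K^+$ described above, plus a Gauss-Bonnet computation on the unit spheres of the connection graph --- so it cannot serve as an independent input without first doing the work of the theorem. Worse, your proposed proof of the global identity $\sum_{x,y}g(x,y)=\chi(G)$ is not an argument: knowing $\det(L')=\pm1$ gives no mechanism for evaluating $\mathbf{1}^{\mathsf T}(L')^{-1}\mathbf{1}$, and the claim that ``the vector $\mathbf{1}$ picks out $\sum_k(-1)^kf_k$'' simply restates the conclusion. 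In the paper that identity is a corollary of the theorem, not a route to it. The fix is to drop the potential-theoretic detour and prove the statement as what it is, a Poincar\'e-Hopf theorem in disguise.
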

\begin{proof}
The unit sphere $S(x)$ in $G_1$ is the Zykov join $S^-(x) + S^+(x)$ of the stable and 
unstable sphere defined as $S^-(x) = \{ y \in S(x) \; | \; x \subset y \}$
and $S^+(x) = \{ y \in S(x) \; | \; y \subset x \}$. We know that
the functional $i(G) = 1-\chi(G)$ is multiplicative for the join $i(S(x)) = i(S^+(x)) i(S^-(x))$.
But because $S^-(x)$ is the $({\rm dim}(x)-1)$-skeleton of the simplex $x$ and so a $({\rm dim}(x)-1)$-sphere
which has $\chi(S^-(x)) = 1 - (-1)^{1+{\rm dim}(x)}$ so that $i(S^-(x))=(-1)^{{\rm dim}(x)}$
and the Poincar\'e-Hopf index $i^+(x) = 1-\chi(S^+(x))$ is the curvature 
$(1-\chi(S(x))) (-1)^{\rm dim}(x)$. Now use Poincar\'e-Hopf. 
\end{proof}

\paragraph{}
As the punchline of the proof was Poincar\'e-Hopf, this Gauss-Bonnet result is just a 
Poincar\'e-Hopf result \cite{poincarehopf} in disguise. It 
belongs to the Morse function $f(x)=-{\rm dim}(x)$ which is locally injective (a coloring) 
on $G_1$. Its structure is in general more
interesting than the stable curvature $K^-(x)$, which is like $K^+(x)$ a divisor on $G$.
The Gauss-Bonnet result for $K^-(x)=(-1)^{{\rm dim}(x)}$ is
essentially the definition of Euler characteristic of $G$ and known to coincide with $\chi(G_1)$.  
Note that $\chi(G')$ is in general different from $\chi(G)$. For the Whitney complex $G$ 
of the octahedron for example, $G'$ has Euler characteristic $0$ and is actually homotopic to a 
3-sphere. 

\paragraph{}
We have seen that averaging Poincar\'e-Hopf over natural probability spaces gives
the Euler curvature \cite{indexexpectation,knillgraphcoloring}. 
One of the simplest averages is $j_f(x)=(i_f^+(x)+i_f^-(x))/2$ which is of topological interest.
For odd-dimensional $x$, this is $0-\chi(S(x))/2$. 
For even dimensional $x$, this is $1-\chi(S(x))/2$. In \cite{indexformula} we have seen that 
$j_f(x)=1-\chi(S(x))/2 - \chi(B_f(x))/2 (*)$,
where $B_f(x)=\{ y \; | \; f(y)=f(x) \}$ is a discrete contour-surface in the sense 
of \cite{KnillSard}. Therefore, for $f={\rm dim}$ and even dimensional $x$, where 
$i_f^+(x)=1-\chi(S(x)), i_f^-(x)=1$ we have $\chi(B_f(x))=0$.
If $x$ is odd dimensional, where $i_f^+(x)=\chi(S(x))-1$, $i_f^-(x)=-1$, then 
$j_f(x) = \chi(S(x))/2-1$ and so $\chi(B_f(x)) = \chi(S(x))/2-1$. 
If $G$ is a $d$-graph, where all unit spheres $S(x)$ are $(d-1)$-spheres,
then $B_f(x)$ are $d-2$ spheres, then if $x$ is even dimensional $j_f(x)=1-\chi(S(x))/2=1$
and if $x$ is odd dimensional, $j_f(x)=\chi(S(x))/2-1=-1$. But the point of the 
index formula (*) was that in the four dimensional case and any $f$, the index can be written
in terms of $1-\chi(B_f(x))/2$, where $B_f(x)$ is a $2$-dimensional graph so that by applying
Gauss-Bonnet and index-averaging the Euler characteristic is related to an average sectional
curvature. 

\section{McKean-Singer} 

\paragraph{}
The {\bf super trace} of a matrix $A$ acting on $\RR^n$, where $n$ is the
number of faces of $G$ is defined as the super sum of $A$ over the diagonal: 
$$   {\rm str}(A) = \sum_x (-1)^{{\rm dim}(x)} A_{xx} \; . $$
For example, the super trace of the identity operator is by definition 
the Euler characteristic 
$$ {\rm str}(1) = \sum_x (-1)^{{\rm dim}(x)} = \chi(G)  \; . $$
And since the connection Laplacian has $1$ in the diagonal, we know this also for 
the connection Laplacian $L'=1+A'$ 
$$ {\rm str}(L') = \chi(G) \; . $$

Not so obvious is the following McKean-Singer interpretation for the Green functions
$g=L'^{-1}$:

\begin{coro}
If $g$ is the Green function of a simplicial complex $G$, then ${\rm str}(g)= \chi(G)$
\end{coro}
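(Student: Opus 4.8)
The plan is to reduce the statement directly to Theorem 1 by means of the diagonal formula for the Green function, so that no new analytic input is needed. First I would recall the facts already available: since the connection Laplacian $L'=1+A'$ is unimodular (see \cite{Unimodularity} and Section 2), every entry $g(x,y)=(L')^{-1}_{xy}$ is an integer, and the diagonal entries satisfy $g(x,x)=1-\chi(S(x))$, where $S(x)$ is the unit sphere of the face $x$ in the Barycentric refinement $G_1$. This diagonal identity is the only substantive ingredient; the rest of the argument is just sign bookkeeping.

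Next, I would unfold the definition of the super trace applied to $g$, namely $\mathrm{str}(g)=\sum_x (-1)^{\dim(x)} g(x,x)$, the sum running over the faces of $G$. Substituting the diagonal formula gives $\mathrm{str}(g)=\sum_x (-1)^{\dim(x)}\bigl(1-\chi(S(x))\bigr)$. By the definition of the unstable curvature, $K^+(x)=(-1)^{\dim(x)}\bigl(1-\chi(S(x))\bigr)$, so the right-hand side is precisely $\sum_x K^+(x)$. Applying Theorem 1, which states $\sum_x K^+(x)=\chi(G)$ for every finite abstract simplicial complex, then yields $\mathrm{str}(g)=\chi(G)$, and the proof is complete.

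As for obstacles: within the logical scope of this paper there is essentially none, since once the identity $g(x,x)=1-\chi(S(x))$ is granted the corollary collapses to a three-line chain of equalities ending in Theorem 1. The genuine difficulty lives upstream, in the results being invoked — the unimodularity of $L'$ together with the Cramer's-rule evaluation of the diagonal cofactor as the Poincaré--Hopf index $1-\chi(S(x))$ of the dimension functional at $x$, and the Poincaré--Hopf argument that powers Theorem 1 itself. In writing this up I would keep the presentation short but flag the conceptual point made in the introduction: because $g=(L')^{-1}$ is a time-discrete heat kernel living on the same Hilbert space as the Hodge Laplacian $L$, the formula $\mathrm{str}(g)=\chi(G)$ is the exact combinatorial counterpart of the McKean--Singer identity $\mathrm{str}(e^{-Lt})=\chi(G)$, so I would phrase the proof so that this analogy with the heat-kernel super trace remains visible.
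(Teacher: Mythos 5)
Your proof is correct and follows exactly the paper's route: quote the diagonal identity $g(x,x)=1-\chi(S(x))$ from the unimodularity/sphere-geometry results, recognize $(-1)^{\dim(x)}g(x,x)$ as the unstable curvature $K^+(x)$, and invoke Theorem 1. Nothing to add.
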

\begin{proof}
The diagonal elements of $g_{xx}=(1+A')^{-1}_{xx}$ are 
$1-\chi(S(x))$ \cite{Spheregeometry}. The Gauss-Bonnet theorem shows that 
$$  {\rm str}(g) = \sum_x (-1)^{{\rm dim}(x)} g_{xx}  = \chi(G) \; . $$
\end{proof}

\paragraph{}
Since the {\bf discrete time heat equation} $u(n+1)-u(n) = A' u(n)$
means $u(n-1)=(1+A')^{-1} u(n)$, this formula is a cousin of the
usual McKean-Singer formula ${\rm str}(\exp(-L)) = \chi(G)$
for the Hodge-Laplacian $L=(d+d^*)^2$ of $G$. This result for manifolds
\cite{McKeanSinger} was ported to finite geometries in \cite{knillmckeansinger}
by adapting the super symmetry proof \cite{Cycon}. 
The matrices $A'$ and $L$ have the same size and both 
$$  \exp(-L)=1-L+L^2/2!-\dots$$ 
and 
$$  (1+A')^{-1} = 1-A+A^2-\dots $$
have the same super trace $\chi(G)$. The second geometric sum is divergent
and has to be understood as an analytic continuation of the matrix-valued function
$$ \Zeta(z) = (1+z A')^{-1} $$
which is analytic for complex $|z|<1$ and which has as a determinant
the Bowen-Lanford zeta function \cite{BowenLanford} 
$$ \zeta(z) = \det(\Zeta(z)) \; . $$

\begin{figure}[!htpb]
\scalebox{0.25}{\includegraphics{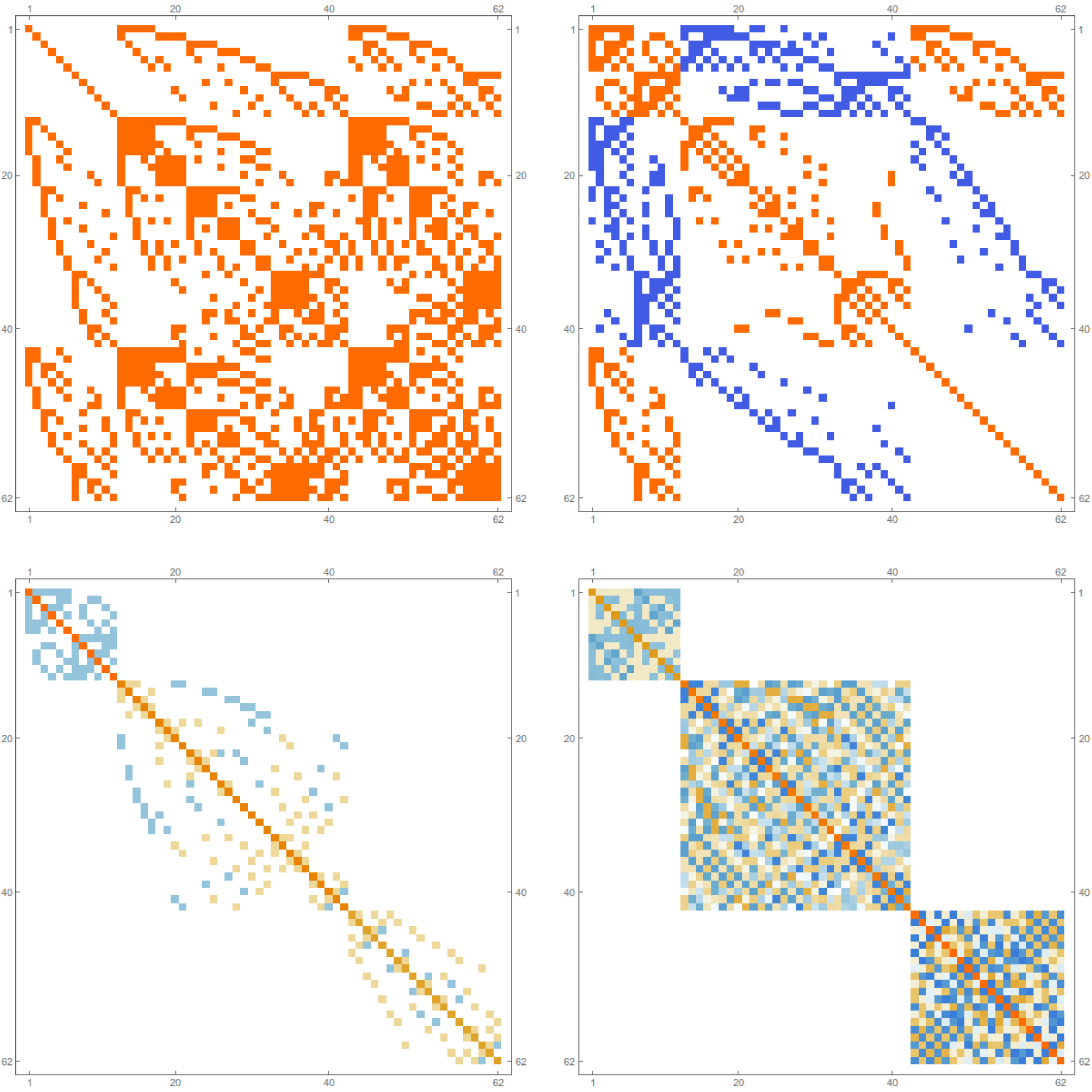}}
\caption{
Above we see the Fredholm connection matrix $L'=(1+A')$ and its inverse $g=(L')^{-1}$
in the case of the icosahedron graph $G$. 
Below is the Hodge Laplacian $L=(d+d^*)^2$ and its pseudo inverse $L^{-1}$. 
All matrices are $62 \times 62$ matrices as the f-vector of $G$ is 
$(v,e,f)=(12,30,20)$ so that $G$ has $12+30+20=62$ faces. The Hodge
Laplacian $L=(d+d^*)$ is block diagonal with
Kirchhoff Laplacian $L_0=d_0^* d_0$, the operator $L_1=d_1^* d_1 + d_0 d_0^*$
and $L_2=d_2 d_2^*$. The diagonal elements $g(x,x)=1-\chi(S(x))=1$
are topological. The total energy $\sum_{x,y} g(x,y) = 2$ is $\chi(G)=12-30+20$. 
The diagonal elements of the pseudo inverse $L^{-1}$ has 
diagonal elements $7/36$ in $L_0^{-1}$ and $86/225$ on $L_1^{-1}$ and $L_2^{-1}$.
Its super trace of $L^n$ is zero for all $n \neq 0$ by McKean-Singer super symmetry:
the union of the non-zero spectra $\sigma(L_0)=\{ (5 \pm \sqrt{5})^{(3)},6^{(5)} \}$
and $\sigma(L_2) = \{ (3\pm \sqrt{5})^{(3)},5^{(4)},3^{(4)},2^{(5)}\}$ coincides with 
the non-zero spectrum $\sigma(L_1)$. }
\end{figure}

\section{Newtonian potential}

\paragraph{}
The {\bf Newtonian potential} of a simplex $x \in V(G_1)$ is defined as the function 
$V_x(y) = g(x,y)$. For a measure $p$ onto the vertex set of $G_1$, the 
{\bf potential of the measure $p$ at $x$} is 
$$ V_x(p) = \sum_y p(y) V_x(y) \; . $$
The {\bf total energy} of the probability measure $p$ on $G$ is 
$$ H(p)  = \sum_{x} V_x(p) = \sum_{x,y} V_x(y) p(x) p(y)  = \sum_{x,y} g(x,y) p(x) p(y) \; . $$
We want now to show that if we put equal mass $1$ to each of the
points $x$, then $V_x(1) = \sum_y V_x(y)$ agrees with unstable curvature. 
In order to prove this we first give a formula for the 
{\bf connection vertex degree} $d_{G'}(x)$ of $x$ in terms of stable spheres 
$S^+(y) = \{ z \in S(y)\; | \; x \subset y \}$ in $G_1$. We denote
by $B_{G'}(x)$ the {\bf unit ball} of $x$ in the connection graph $G'$. 

\begin{lemma}
$d_{G'}(x)= \sum_{y \in B_{G'}(x)} \chi(S^+(y))$. 
\label{balllemma}
\end{lemma}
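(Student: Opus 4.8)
The plan is to reduce the identity to elementary alternating-sum bookkeeping over the face poset of $G$. First I fix the two objects. The closed unit ball is $B_{G'}(x)=\{y\in G:\ y\cap x\neq\emptyset\}$; it contains $x$, and $|B_{G'}(x)|=d_{G'}(x)+1$. The stable sphere $S^+(y)$ is the half of the Zykov join $S(y)=S^-(y)+S^+(y)$ (used already in the proof of the Gauss--Bonnet theorem above) that is the subgraph of $G_1$ induced on the proper cofaces $\{z\in G:\ y\subsetneq z\}$; via $z\mapsto z\setminus y$ this poset is order-isomorphic to the face poset of $\mathrm{link}_G(y)$, so $S^+(y)$ is the Barycentric refinement of $\mathrm{link}_G(y)$ and therefore
\[
  \chi(S^+(y))=\chi(\mathrm{link}_G(y))=1-K^+(y),
\]
the last equality being the join relation $1-\chi(S(y))=(-1)^{\dim y}(1-\chi(\mathrm{link}_G(y)))$.

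Next I expand $\chi(\mathrm{link}_G(y))$ over cofaces. The map $w\mapsto z=y\sqcup w$ is a bijection from the nonempty faces $w$ of $\mathrm{link}_G(y)$ onto the faces $z\in G$ with $z\supsetneq y$, and $\dim w=\dim z-\dim y-1$, so
\[
  \chi(S^+(y))=\sum_{z\in G,\, z\supsetneq y}(-1)^{\dim z-\dim y-1}=-\sum_{z\supsetneq y}(-1)^{\dim y+\dim z}.
\]
Summing over $y\in B_{G'}(x)$ and interchanging the order of summation --- which is legitimate because $z\supsetneq y$ together with $y\cap x\neq\emptyset$ forces $z\cap x\neq\emptyset$, so the new outer index $z$ again ranges over $B_{G'}(x)$ --- gives
\[
  \sum_{y\in B_{G'}(x)}\chi(S^+(y))=-\sum_{z\in B_{G'}(x)}(-1)^{\dim z}\sum_{\emptyset\neq y\subsetneq z,\, y\cap x\neq\emptyset}(-1)^{\dim y}.
\]

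The heart of the matter is the inner sum. The nonempty faces $y\subseteq z$ with $y\cap x=\emptyset$ are exactly the nonempty subsets of $z\setminus x$: they span a simplex when $z\not\subseteq x$, and there are none when $z\subseteq x$, so their alternating sum is $1-[z\subseteq x]$. Subtracting this from the alternating sum $1$ of the full simplex $\{\emptyset\neq y\subseteq z\}$ and then dropping the top term $y=z$ (present since $z\cap x\neq\emptyset$) yields
\[
  \sum_{\emptyset\neq y\subsetneq z,\, y\cap x\neq\emptyset}(-1)^{\dim y}=[z\subseteq x]-(-1)^{\dim z}.
\]
Substituting back, the $(-1)^{\dim z}\cdot(-1)^{\dim z}$ contributions sum to $\sum_{z\in B_{G'}(x)}1=|B_{G'}(x)|$, while $-\sum_{z\in B_{G'}(x)}(-1)^{\dim z}[z\subseteq x]=-\sum_{\emptyset\neq z\subseteq x}(-1)^{\dim z}=-1$, since the simplex $\{\emptyset\neq z\subseteq x\}$ is contractible and every nonempty $z\subseteq x$ automatically lies in $B_{G'}(x)$. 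Hence $\sum_{y\in B_{G'}(x)}\chi(S^+(y))=|B_{G'}(x)|-1=d_{G'}(x)$.

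Nothing here is deep; the only places that need care are (a) reading $S^+(y)$ as the Barycentric refinement of $\mathrm{link}_G(y)$ --- forced by $\chi(S^+(y))=1-K^+(y)$ --- rather than of the boundary of the simplex $y$, and (b) keeping the $(-1)^{\dim(\cdot)}$ signs straight in the double sum and in the $y=z$ boundary term, after which everything is the single fact that a simplex has Euler characteristic $1$. The computation can be restated as $\sum_{y\in B_{G'}(x)}K^+(y)=1$ for every face $x$, which is exactly the statement needed afterwards to identify the row sums of $g=(L')^{-1}$ with $K^+$, and hence to obtain $\sum_{x,y}g(x,y)=\chi(G)$.
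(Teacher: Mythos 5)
Your proof is correct, but it follows a genuinely different route from the paper's. The paper proves the lemma by a reduction: using the join decomposition $S(y)=S^-(y)+S^+(y)$ to trade $\chi(S^+(y))$ for $(-1)^{\dim y}\chi(S(y))$ (up to the boundary terms), it rewrites the claimed identity as $\chi(S_{G'}(x))=\sum_{y\in S_{G'}(x)}(-1)^{\dim y}\chi(S(y))$ and then declares this to be Gauss--Bonnet (really Poincar\'e--Hopf) applied to the unit sphere $S_{G'}(x)$ inside the connection graph. That argument is very short but leans on the earlier theorem and on identifying the subgraph $S_{G'}(x)$ of $G'$ as a legitimate target for it, a point the paper does not elaborate. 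You instead expand $\chi(S^+(y))$ as the alternating coface sum $\sum_{z\supsetneq y}(-1)^{\dim z-\dim y-1}$ (via $S^+(y)$ being the Barycentric refinement of ${\rm link}_G(y)$), interchange the order of summation over the face poset restricted to $B_{G'}(x)$, and reduce everything to the single fact that a nonempty simplex has Euler characteristic $1$. This is entirely self-contained, independent of the Gauss--Bonnet theorem and of any sphere-join identities, and as a bonus it isolates the clean reformulation $\sum_{y\in B_{G'}(x)}K^+(y)=1$, i.e.\ $(1+A')K^+=1$, which is precisely what the subsequent proposition on row sums of $g=(L')^{-1}$ needs. The cost is a longer computation with more sign bookkeeping; the benefit is that it sidesteps the sign ambiguities that make the paper's own three-line reduction hard to verify as written.
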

\begin{proof}
Since $\chi(S^+(y)) = (-1)^{{\rm dim}(y)} \chi(S(y))$, this is equivalent to 
$$ d(x) = \sum_{y \in B(x)} (1+(-1)^{{\rm dim}(y)}) + (-1)^{{\rm dim}(y)} \chi(S(y))  \; . $$
This can now be rewritten as 
$$  \chi(S(x)) = \sum_{y \in S(x)} (-1)^{{\rm dim}(y)} \chi(S(y)) $$
which is Gauss-Bonnet for the unit sphere $S_{G'}(x)$.
\end{proof}

\paragraph{}
Lets look at some examples. \\
1) If $G=K_3$ and $x$ is the central vertex of $G_1$ of dimension $2$, 
then $d(x)=6$ and every $\chi(S^+(y))$ in $B(x)$ except $x$ itself has $\chi(S^+(y))=1$. 
2) In a general complex $G$, If $x$ is a facet in $G$ (a face of maximal dimension), 
then $\chi(S^+(x))=0, \chi(S^+(y))=1$ for all neighbors. 

\paragraph{}
Here is the curvature interpretation of the row sum or column sum of the Green function 
matrix $g$. This was first found experimentally and remained a mystery for some time
until realizing that one does not have to know explicit formulas for $g(x,y)$ but only needs
to know about $g(x,x)$: 

\begin{propo}
$\sum_y g(x,y) = (-1)^{{\rm dim}(x)} g(x,x)$. 
\end{propo}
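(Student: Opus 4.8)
The plan is to exploit the relation $L' g = 1$ directly, reading off a single row. Writing $\mathbf{1}$ for the all-ones vector, we have $L'\mathbf{1} = (1+A')\mathbf{1}$, whose $x$-entry is $1 + d_{G'}(x)$, the number of faces intersecting $x$ (including $x$ itself), i.e. $|B_{G'}(x)|$. On the other hand, the quantity we want, $\sum_y g(x,y)$, is the $x$-entry of $g\mathbf{1}$. So I would first establish the intermediate identity $\sum_y g(x,y) = \sum_{z} g(x,z)\cdot\bigl(\text{something}\bigr)$ by inserting $\mathbf{1} = g L' \mathbf{1} = g\,(L'\mathbf{1})$; this gives
\[
\sum_y g(x,y) = \sum_{z} g(x,z)\,(L'\mathbf{1})(z) = \sum_{z} g(x,z)\,|B_{G'}(z)|.
\]
That is not yet obviously the target. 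The better route is to combine the already-proven pieces: Gauss--Bonnet for $K^+$ (Theorem~1) gives $\sum_z K^+(z) = \chi(G)$, and more to the point, the row-curvature relation should be extracted from $A' K^+ = ?$. So the first real step is to identify what $L'$ does to the curvature vector $K^+$.

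The key computation I expect to carry the proof is: $(L' K^+)(x) = \sum_{y}(1+A')_{xy} K^+(y) = \sum_{y \in B_{G'}(x)} K^+(y)$, and I would show this equals $(-1)^{\dim(x)}$. Indeed $K^+(y) = (-1)^{\dim(y)} i^+(y)$ where $i^+(y) = 1-\chi(S^+(y))$ is the Poincar\'e--Hopf index of the function $-\dim$ restricted to the unit ball $B_{G'}(x)$ (the faces meeting $x$), and local Poincar\'e--Hopf on this ball — which is a cone, hence contractible with $\chi = 1$ — should collapse the sum of indices to a single boundary contribution, namely $(-1)^{\dim(x)}$. Concretely I expect to use Lemma~\ref{balllemma} together with the observation that $\sum_{y\in B_{G'}(x)} K^+(y)$ telescopes via the Gauss--Bonnet identity for the sphere $S_{G'}(x)$, exactly as in the lemma's proof, leaving only the center term $K^+(x)$'s ``defect'' which is $(-1)^{\dim(x)}$. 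Once $L' K^+ = s$ is known, where $s(x) = (-1)^{\dim(x)}$ is the super-signature vector, I apply $g = (L')^{-1}$ to both sides to get $K^+ = g\, s$, i.e. $K^+(x) = \sum_y g(x,y)(-1)^{\dim(y)}$. But the Mathematica check in the excerpt (the line \texttt{A.K==One}) actually suggests $L' K^+ = \mathbf{1}$, giving $K^+ = g\mathbf{1}$, i.e. $\sum_y g(x,y) = K^+(x) = (-1)^{\dim(x)}(1-\chi(S(x))) = (-1)^{\dim(x)} g(x,x)$, which is exactly the claim. So the crucial lemma to nail down is $\sum_{y \in B_{G'}(x)} K^+(y) = 1$, equivalently $\sum_{y\in B_{G'}(x)} (-1)^{\dim(y)}(1-\chi(S(y))) = 1$.

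To prove that lemma I would rewrite the left side as $|B_{G'}(x)| \cdot (\text{avg of }(-1)^{\dim y}) - \sum_{y\in B_{G'}(x)}(-1)^{\dim y}\chi(S(y))$; the first piece is $\chi(B_{G'}(x)) = 1$ since the connection unit ball is a cone over $S_{G'}(x)$ hence contractible, and the second piece must vanish. Vanishing of $\sum_{y\in B_{G'}(x)}(-1)^{\dim y}\chi(S(y))$ is precisely the curvature form of Gauss--Bonnet applied to the ball $B_{G'}(x)$ in $G'$ relative to $G_1$-spheres, and is essentially the content already used inside the proof of Lemma~\ref{balllemma} (there stated for the sphere $S_{G'}(x)$; here one also needs the center $x$, whose contribution $(-1)^{\dim x}\chi(S(x))$ must be absorbed — this is where I expect the one genuinely fiddly sign-bookkeeping to live). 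The main obstacle, then, is not any deep theorem but getting the decomposition of $B_{G'}(x)$ and the interplay between the $G'$-combinatorics (who meets whom) and the $G_1$-combinatorics (unit spheres $S(y)$, stable/unstable spheres) to line up so that the telescoping is exact; once that bookkeeping is done, applying $(L')^{-1}$ is immediate and finishes the proof.
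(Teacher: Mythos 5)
Your overall architecture is the right one and coincides with the paper's: reduce the claim to the vector identity $(1+A')K^+ = \mathbf{1}$ and then apply $g=(1+A')^{-1}$. The genuine gap is in how you propose to prove the key identity $\sum_{y\in B_{G'}(x)}K^+(y)=1$. You split $K^+(y)=(-1)^{\dim(y)}(1-\chi(S(y)))$ into two sums and claim that $\sum_{y\in B_{G'}(x)}(-1)^{\dim(y)}$ equals $\chi(B_{G'}(x))=1$ because the connection ball is a cone, and that $\sum_{y\in B_{G'}(x)}(-1)^{\dim(y)}\chi(S(y))$ vanishes. Both sub-claims are false. The exponent $\dim(y)$ is the dimension of $y$ as a face of $G$, not a combinatorial dimension inside the ball graph, so the first sum is not the Euler characteristic of that (indeed contractible) ball. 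Concretely, for $G=K_3$ and $x$ a vertex $a$, the ball is $B_{G'}(a)=\{a,ab,ac,abc\}$ and one computes $\sum_y(-1)^{\dim(y)}=1-1-1+1=0$ while $\sum_y(-1)^{\dim(y)}\chi(S(y))=1-1-1+0=-1$; the two errors happen to cancel to give the correct total $1$, but neither piece behaves as you assert, so the proposed telescoping does not go through as stated.

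The repair is precisely the step you brushed past. Use the identity, already established inside the proof of the Gauss--Bonnet theorem, that $K^+(y)=(-1)^{\dim(y)}(1-\chi(S(y)))=1-\chi(S^+(y))$, where $S^+(y)$ is the stable sphere. Then
$$\sum_{y\in B_{G'}(x)}K^+(y)=|B_{G'}(x)|-\sum_{y\in B_{G'}(x)}\chi(S^+(y))=(1+d_{G'}(x))-d_{G'}(x)=1$$
by Lemma~\ref{balllemma} verbatim, with no cone or vanishing argument needed; inverting $L'$ then finishes the proof exactly as you planned. Two smaller slips should also be fixed: the warm-up display $\sum_y g(x,y)=\sum_z g(x,z)(L'\mathbf{1})(z)$ is not an identity, since the right-hand side is $(gL'\mathbf{1})(x)=1$, which would force every row sum of $g$ to equal $1$ (you rightly abandon that route); and $K^+(y)$ equals $i^+(y)=1-\chi(S^+(y))$ itself, not $(-1)^{\dim(y)}i^+(y)$.
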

\begin{proof}
Since the right hand side is $K^+(x)$, this 
this can be restated in vector form as $g 1 = K^+$, where $1$ is the vector
which is constant $1$. As 
$g=(1+A')^{-1}$, this is equivalent to $(1+A') K^+ = 1$.
But since $K^+(x)=1-\chi(S^+(y))$ and $A'$
is the adjacency matrix of $G'$, this means
$$ (1+A') (1-\chi(S^+(y))) = 1 + d_{G'}(x) - \sum_{y \in S(x)} \chi(S^+(y)) = 1 \; . $$
Now use Lemma~(\ref{balllemma}).
\end{proof}

\paragraph{}
This proposition implies that if $x$ has even dimension, then the potential contribution
from points outside $x$ is zero and that for odd-dimensional $x$, the 
potential energy contributions from points outside of $x$ is equal to the internal
energy $g(x,x)$ of the point $x$. 

\paragraph{}
We immediately get a formula for the total energy. Also this relation was first found 
experimentally and triggered writing down this note: 

\begin{coro}
$\sum_{x,y} g(x,y) = \chi(G)$. 
\end{coro}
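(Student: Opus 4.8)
The plan is to combine the Gauss--Bonnet theorem for $K^+$ (the first Theorem of the excerpt) with the Proposition immediately preceding this corollary. The Proposition states $\sum_y g(x,y) = (-1)^{\dim(x)} g(x,x) = K^+(x)$, which identifies each row sum of the Green matrix $g$ with the unstable curvature at $x$. Summing this identity over all faces $x$ of $G$ gives
\[
\sum_{x,y} g(x,y) = \sum_x \left( \sum_y g(x,y) \right) = \sum_x K^+(x).
\]
Now the Gauss--Bonnet Theorem for $G_1$ says precisely that $\sum_x K^+(x) = \chi(G)$, so the two identities chain together to yield $\sum_{x,y} g(x,y) = \chi(G)$.

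Concretely, the steps in order are: (1) invoke the Proposition to rewrite each inner sum $\sum_y g(x,y)$ as $K^+(x)$; (2) interchange the order of summation in the double sum $\sum_{x,y} g(x,y)$ (legitimate since it is a finite sum over the faces of $G$) to expose the row sums; (3) apply Gauss--Bonnet $\sum_x K^+(x) = \chi(G)$ to conclude. One could equivalently phrase this in the vector form already used in the Proposition's proof: writing $1$ for the all-ones vector, the Proposition gives $g\,1 = K^+$, and pairing with $1$ on the left gives $1^T g\, 1 = 1^T K^+ = \sum_x K^+(x) = \chi(G)$, where the last equality is Gauss--Bonnet; the left side $1^T g\, 1$ is exactly $\sum_{x,y} g(x,y)$.

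There is essentially no obstacle here: the corollary is a one-line consequence of two results already proven in the excerpt (the Proposition and the Gauss--Bonnet Theorem for $K^+$). The only thing to be careful about is the bookkeeping — making sure the Proposition is applied with the correct orientation of the sum (row versus column, which agree since $g$ is symmetric as the inverse of the symmetric matrix $L' = 1 + A'$) and that the interchange of summation is valid, both of which are trivial for finite complexes. I would therefore expect the author's proof to be a single sentence: "By the Proposition, $\sum_y g(x,y) = K^+(x)$; now sum over $x$ and apply Gauss--Bonnet."
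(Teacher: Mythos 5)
Your proposal is correct and follows exactly the paper's route: the author's proof is the one-liner ``Use the lemma and the Gauss-Bonnet result,'' i.e.\ combine the Proposition $\sum_y g(x,y) = K^+(x)$ with the Gauss--Bonnet theorem $\sum_x K^+(x)=\chi(G)$. Your version just spells out the bookkeeping that the paper leaves implicit.
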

\begin{proof}
Use the lemma and the Gauss-Bonnet result.
\end{proof}

\paragraph{}
In some sense, the measure of maximal entropy leads to an energy which is 
topological. When rescaled by the size of the network, we get a number which 
is independent of Barycentric refinements. 
We have seen in other places \cite{indexformula,KnillFunctional} 
that the Euler characteristic has
some affinities with the Hilbert action, the sum over all sectional curvatures
at a point. Seeing both classical and relativistic connections between 
gravity and Euler characteristic makes the functional even more interesting. 

\section{Minimizing energy} 

\paragraph{}
The problem of minimizing potential energy
$$ F(p) = (g p,p) = \sum_{x,y} p(x) p(y) g(x,y)   $$
among probability measures $p$ is a Lagrange problem as we
have the constraint $(p,1)=1$. 

\begin{lemma}
The minimal energy configurations have $p_k = (1+d(x))/Z$,
where $Z$ is the normalisation factor so that $\sum_k p_k =1$. 
\end{lemma}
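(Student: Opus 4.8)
The plan is to treat this as a Lagrange multiplier problem on the affine constraint $(p,1)=1$, exactly as the setup suggests. Since $g=(1+A')^{-1}$ is symmetric, the quadratic form $F(p)=(gp,p)$ has gradient $2gp$, while the gradient of the constraint $p\mapsto (p,1)-1$ is the constant vector $1$. Hence any critical point $p$ of $F$ on the constraint surface satisfies $2gp=\lambda 1$ for some scalar $\lambda$.

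The next step is to invert this relation. Multiplying both sides by $g^{-1}=L'=1+A'$ gives $2p=\lambda(1+A')1$. Evaluating the right-hand side at a face $x$ gives $\bigl((1+A')1\bigr)(x)=1+\sum_y A'_{xy}=1+d_{G'}(x)$, the connection degree of $x$ plus one, so $p(x)=(\lambda/2)(1+d(x))$. Because $1+d(x)\ge 1>0$ for every face, this vector has strictly positive entries, hence lies in the interior of the probability simplex, and the constraint $(p,1)=1$ forces $\lambda/2=1/Z$ with $Z=\sum_x(1+d(x))$. This yields $p(x)=(1+d(x))/Z$ as claimed; as a byproduct one reads off the value $F(p)=(gp,p)=(\lambda/2)(1,p)=1/Z$, which can be compared with the value $\chi(G)/n^2$ of the uniform measure via the Corollary.

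It then remains to confirm that this critical point is genuinely an energy \emph{minimum}, not merely a stationary point. Writing an arbitrary probability vector as $q=p+v$ with $(v,1)=0$, the Lagrange identity $gp=(\lambda/2)1$ makes the cross term $2(gp,v)=\lambda(1,v)$ vanish, so that $F(q)-F(p)=(gv,v)$. Thus the statement reduces to showing that the quadratic form associated with $g$ is nonnegative on the hyperplane $1^{\perp}$. This is the delicate point, and I expect it to be the main obstacle: the connection Laplacian $L'$, and with it $g$, is in general indefinite, so one cannot simply invoke positivity of $g$. The argument here should either use a spectral fact about the restriction of $L'$ to $1^{\perp}$, or — since we only ever optimize over the compact simplex — proceed by a direct comparison showing that no lower value than $F(p)=1/Z$ is attained on the boundary faces where some coordinate vanishes. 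The identification of the critical configuration above, by contrast, is essentially forced once the Lagrange condition is written down.
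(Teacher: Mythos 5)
Your identification of the critical point is exactly the paper's argument: write the Lagrange condition $2gp=\lambda 1$, apply $g^{-1}=1+A'$ to get $p_k=(\lambda/2)(1+d(x))$, and let the normalization fix $\lambda$. The second-order concern you raise at the end --- that one must still check $(gv,v)\ge 0$ on the hyperplane $(v,1)=0$ to know the stationary point is a minimum, which is not obvious since $g$ is indefinite --- is legitimate, but the paper's own proof does not address it either; it likewise stops at the stationarity condition, so your write-up is at least as complete as the original and more candid about what remains unverified.
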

\begin{proof}
The Lagrange equations are
$$ 2 g p = \lambda 1, (p,1) = 1 \; . $$
Since we can invert $g$, this gives $p = (1+A') 1 (\lambda/2)$
which means $p_k = (1+d(x)) (\lambda/2)$ 
The probability measure condition fixes $\lambda$. 
\end{proof}

\paragraph{}
More in the spirit of quantum mechanics is to write the probability
as the square of a {\bf wave function amplitude} $p=|\psi|^2$. We
minimize then $F(\psi)=(g \psi,\psi)$ under the constraint $(\psi,\psi)=1$.
The Lagrange problem is then a more familiar {\bf eigenvalue problem} 
$$ 2g \psi = \lambda \psi $$
and the Lagrange multiplier is an eigenvalue. In other words, $\psi$ is
then the {\bf Perron-Frobenius type eigenvector} and $\lambda$ is the 
largest eigenvalue of $g$ which means the lowest eigenvalue = {\bf ground state}
of the connection Laplacian $L'$. The magic is that this lowest eigenvalue 
is never $0$ and that this holds for an arbitrary abstract simplicial complex $G$.

\paragraph{}
More in the spirit of discrete Markov process is to look at the 
{\bf San Diego type Laplacian}  \cite{Chung97}
$\tilde{L}=P L' P$, where $P={\rm Diag}(1/\sqrt{p})$. Now $\psi=\sqrt{p}$  is an
eigenvector of the operator $\tilde{g}=\tilde{L}^{-1}$ with maximal eigenvalue
$c=1/\lambda_1$, where $\lambda_1$ is the {\bf minimal eigenvalue} of $L'$. 
This {\bf ground state energy} is also 
$$  \lambda_1 = \sum_{x,y} g(x,y) p(x) p(y) \; . $$
Compare this with 
$$  \chi(G)   = \sum_{x,y} g(x,y) \; . $$ 
and 
$$  \chi(G)   = \sum_{x,y} \tilde{g}(x,y) (1/\psi(x) ) (1/\psi(y))  \; . $$

\paragraph{}
Again we have to point out that minimizing energy classically for the potential
$-1/|x|$ of the Laplacian $-(4 \pi)^{-1} \Delta$ in $\RR^3$ does not make much sense due to
the unboundedness of the Green function. This also applies to the Green function $g$
the Hodge Laplacian $L$ for a finite abstract simplicial complex, where 
$(g \psi, \psi)$ can get arbitrary large
if $\psi$ gets close to a constant field, the minimizer of entropy.

\paragraph{}
What is unique about the connection 
Laplacian is that it leads to a {\bf natural quantization}, without the need for any 
regularization. The need for regularization penetrates classical field theories. 
What happens in the connection Laplacian is that the unimodularity theorem has shown
that independent of the network, the $0$ energy is uniformly off limits.
There is no need for numerical tricks nor the need for any regularizations. We would
not be surprised to see it to emerge to be relevant in some physics. 

\section{Shannon Entropy and Helmholtz free energy}

\paragraph{}
The {\bf Shannon entropy} of a probability measure $p$ on the set of faces
$x$ in a finite simplicial complex $G$ is defined \cite{Shannon48} as 
$$ S(p) = - \sum_x p(x) \log(p(x))  \; , $$
where the sum is over all faces $x$ of $G$.
The usual understanding is that if $p(x)=0$, then $p(x) \log(p(x))=0$. 
We will see that this case appears, especially 
at the zero temperature limit $\beta=1$ of the Helmholtz free energy
functional $F(p,\beta)$ of $G$. Actually, without defining any process of changing
$G$, the selection done by $p$ could get interesting geometries: 
run the variational problem to the Barycentric limit and hope for the appearance of some
universal $\beta$ value which leads to a space $G_{\infty}$ selected out by 
the limiting matter distribution $p_{\infty}$. Then study this geometry $G$. 

\paragraph{}
The Lagrange equations for the functional $S(p)$ under the constraint 
$(p,1)=1$ leads to the uniform distribution $p(x)=1/n$, where $n$ is the
number of simplices. With an adjusted inner product normalized so that then 
$(p,1)=1/n$, we can say:

\begin{coro}
The energy of the entropy maximizing measure is the Euler characteristic. 
\end{coro}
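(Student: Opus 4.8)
The plan is to reduce the statement to the earlier Corollary $\sum_{x,y} g(x,y) = \chi(G)$ by pinning down the entropy maximizer explicitly. First I would show that the uniform measure $p(x) = 1/n$ on the $n$ faces of $G$ is the unique maximizer of $S(p) = -\sum_x p(x)\log(p(x))$ on the probability simplex $(p,1)=1$. This is exactly the Lagrange computation already indicated before the corollary: the stationarity condition $-\log(p(x)) - 1 = \lambda$ forces $p(x)$ to be independent of $x$, hence $p \equiv 1/n$. Strict concavity of $t \mapsto -t\log(t)$ upgrades this critical point to the global maximum, and the boundary faces of the simplex (some $p(x)=0$) are ruled out because $S$ is strictly smaller there. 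So the entropy-maximizing measure is the constant one.

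Next I would substitute this measure into the total energy functional $H(p) = \sum_{x,y} g(x,y)\, p(x) p(y)$ of the Newtonian-potential section. Since $p$ is constant, $H(p) = n^{-2}\sum_{x,y} g(x,y)$, and the earlier Corollary gives $\sum_{x,y} g(x,y) = \chi(G)$, so $H(p) = \chi(G)/n^2$ with the plain normalization. To obtain the clean equality asserted in the statement I would then make the normalization explicit: pass to the constant vector $\mathbf{1} = n p$ — equivalently rescale the inner product so that the entropy maximizer carries the normalization $(p,1)=1/n$ quoted in the statement — under which $H(\mathbf{1}) = \sum_{x,y} g(x,y)\cdot 1\cdot 1 = \chi(G)$ directly. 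As a cross-check I would note that this also follows by applying the Proposition $\sum_y g(x,y) = (-1)^{\dim(x)} g(x,x) = K^+(x)$ first, giving $H$ at the constant measure as $\sum_x K^+(x)$, which is $\chi(G)$ by the Gauss-Bonnet Theorem for $K^+$; the two earlier results dovetail here.

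The one delicate point is bookkeeping rather than substance: one must state precisely which inner-product rescaling converts $\chi(G)/n^2$ into $\chi(G)$, and one should be honest that, once that convention is fixed, the corollary is essentially a specialization of $\sum_{x,y} g(x,y) = \chi(G)$ to the entropy maximizer, not an independent theorem. I expect no genuine obstacle beyond fixing the normalization and verifying that the uniform measure is the global entropy maximizer and not merely a critical point — both of which are routine.
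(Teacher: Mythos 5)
Your proposal is correct and follows essentially the same route as the paper, which simply observes (in the paragraph preceding the corollary) that the Lagrange problem for $S(p)$ under $(p,1)=1$ gives the uniform distribution and then invokes the rescaled inner product together with $\sum_{x,y} g(x,y)=\chi(G)$. Your additional care about global maximality of the uniform measure and the explicit normalization bookkeeping only makes precise what the paper leaves implicit.
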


\paragraph{}
For a real non-negative temperature $T$, the {\bf Helmholtz free energy} is defined as 
$$ F(p,T) = H(p) - T S(p) \; , $$
where $H(p) = \sum_{x,y} g(x,y) p(x) p(y)$ is the {\bf internal energy}
and $S(p)$ is the Shannon entropy. The Helmholtz free energy allows classically
to compute all important thermodynamic properties of a system.
The equivalent functional $F(p,\beta) = \beta H(p) - S(p)$
is better suited for describing the {\bf infinite temperature} limit 
$\beta=0$, which means maximizing entropy. In order to study both limits, we will
always look at 
$$ F(p,\beta) = \beta H(p) - (1-\beta) S(p) \;  $$
so that with $\beta \in [0,1]$, we start with $\beta=0$ 
the {\bf infinite temperature limit} and end up with $\beta=1$, the 
{\bf zero temperature limit}. 

\paragraph{}
Is it possible that for the infinite temperature $\beta=0$ and zero temperature $\beta=1$ 
the critical points are the same? It would require $G'$ to be regular. 
But if $G$ is has maximal dimension larger than $0$, then the
connection graph $G'$ is never regular and the two measures differ. In the one-point
graph $K_1$, the function $F$ is constant. 

\paragraph{}
When using the original $F(p,T) = H(p) - T S(p)$, then the energy increases with $T$:

\begin{lemma}
For any simplicial complex $G$ and every probability measure $p$, 
the function $T \to F(p,T=H(p) - T S(p)$ is strictly monotone. 
\end{lemma}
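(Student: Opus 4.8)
The plan is to notice that for a fixed probability measure $p$ the internal energy $H(p) = \sum_{x,y} g(x,y) p(x) p(y)$ and the Shannon entropy $S(p) = -\sum_x p(x)\log(p(x))$ are both constants, so that $T \mapsto F(p,T) = H(p) - T\,S(p)$ is an affine function of $T$ with constant slope $\partial_T F(p,T) = -S(p)$. The whole statement therefore reduces to pinning down the sign of $S(p)$.

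Next I would invoke the elementary estimate $S(p) \ge 0$: since $-t\log(t) \ge 0$ for every $t \in [0,1]$ (with the usual convention $0\log(0) = 0$), each summand in $S(p)$ is non-negative. Moreover $S(p) = 0$ forces $p(x)\log(p(x)) = 0$ for all faces $x$, hence $p(x) \in \{0,1\}$ for all $x$; combined with $\sum_x p(x) = 1$ this means $p$ is a point mass supported on a single simplex $x_0$. Conversely, if $p$ is not a point mass then at least one value $p(x)$ lies strictly between $0$ and $1$, so $S(p) > 0$.

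Putting the two steps together, whenever $p$ is not concentrated at a single face we get $\partial_T F(p,T) = -S(p) < 0$, a negative constant, and hence $T \mapsto F(p,T)$ is strictly decreasing on $[0,\infty)$ --- indeed on all of $\RR$. The one exceptional configuration is the Dirac measure at a single $x_0$, where $S(p)=0$ and $F(p,T) \equiv g(x_0,x_0)$ is constant in $T$; this is precisely the behavior already flagged for the one-point graph $K_1$, and it is the single caveat attached to the word ``strictly''. There is no genuine analytic obstacle here: the only thing to be careful about is this bookkeeping of the degenerate case, so that the precise statement reads ``$T\mapsto F(p,T)$ is monotone non-increasing for every $p$, and strictly decreasing exactly when $p$ is not a Dirac measure.''
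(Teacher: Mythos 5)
Your proposal follows essentially the same route as the paper: fix $p$, observe that $F(p,T)$ is affine in $T$ with slope $-S(p)$, and control the sign of the entropy. In fact your version is more careful than the paper's, which asserts $S(p)>0$ for \emph{every} probability measure (false for a point mass, where $S(p)=0$ and $F$ is constant in $T$) and also states the slope lies in $[0,\log(n)]$ rather than $[-\log(n),0]$; your explicit identification of the Dirac-measure exception is the correct qualification of the word ``strictly,'' and your observation that $F$ is \emph{decreasing} (not increasing, as the surrounding text claims) in $T$ is the right sign.
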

\begin{proof}
We have $S(p)>0$ for any probability measure $p$
and it is maximized by $\log(n)$ if $G'$ has $n$ vertices.
The slope of the function $T \to F(p,T)$ is therefore in $[0,\log(n)]$. 
\end{proof} 

When fixing $p$ in $F(p,\beta)$ we have the partial derivative 
$F_{\beta}(p,\beta) = H(p) + S(p)$. But away from bifurcation values,
we can look at $\beta \to F(p(\beta),\beta)$. We see  
experimentally that $F_{\beta}(p(\beta),\beta) \leq 0$ and $F(p(\beta),\beta)>0$ at 
critical points but we have only looked at small cases so far. As we have seen earlier, the entries
$g(x,y)$ have both signs can have absolute values larger than $1$ for larger networks. 
Anyway, we would not bet yet on the observed $F_{\beta}(p,\beta) \leq 0$ and $F(p,\beta)>0$ but
just ask whether it is true. 

\paragraph{}
To the zero temperature energy limit $\beta=1$, we did not expect such a complicated behavior
at first because the Lagrange extremization problem gives a unique critical point, the Gibbs
distribution which is completely determined by the vertex degrees of $G_1$. 
Numerical explorations however show also other critical 
points in a limiting sense. What happens is that some of the $p_k$ can become zero for $\beta \to 1$. 
Let $W$ be the support of $p$, then the Lagrange equations of the problem, where $p$ is confined to $W$ 
reads 
$$ 2 g p = \lambda 1_W, (p,1_W) = 1 \; . $$
We don't yet know which subsets $W$ are selected by the thermodynamic functional $F$.

\paragraph{}
In other words, for selected subsets $W$ of the vertex set $V(G')=V(G_1)$
we get a critical point in the limit $\beta \to 1$. 
The zero temperature limit $\beta \to 1$ can therefore become complicated but 
the ``freezing process" appears to select some geometries. 
On the other hand, the high temperature regime is simple. What we don't yet know is 
whether there exists a universal constant $\beta_0 \in (0,1)$ such that for $\beta < \beta_0$
the free energy is analytic for any simplicial complex $G$. 
Classical statistical mechanics stories expect something
like this to happen: at high temperatures, entropy wins over energy and smooths out
free energy preventing bifurcations. It is also possible that $F$ needs to be rescaled depending on the
size of $G$ as in the infinite temperature case $\beta=0$, the measure $p$ is a uniform 
measure of weight $1/n$. Taking $p=1$ for example gives at $\beta=0$ always the 
free energy $\chi(G)$, independent of the Barycentric refinement. 

\begin{figure}[!htpb]
\scalebox{0.14}{\includegraphics{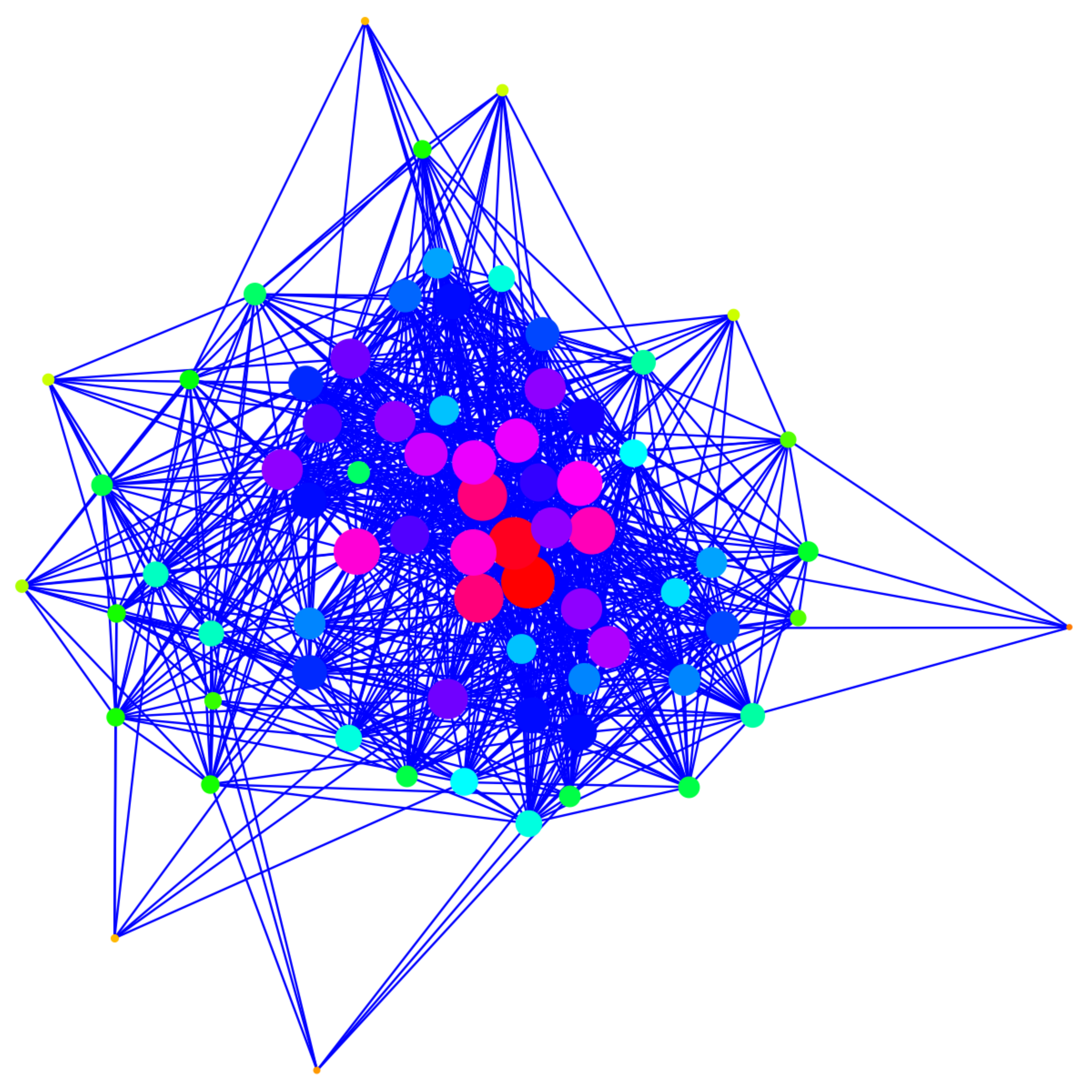}}
\scalebox{0.14}{\includegraphics{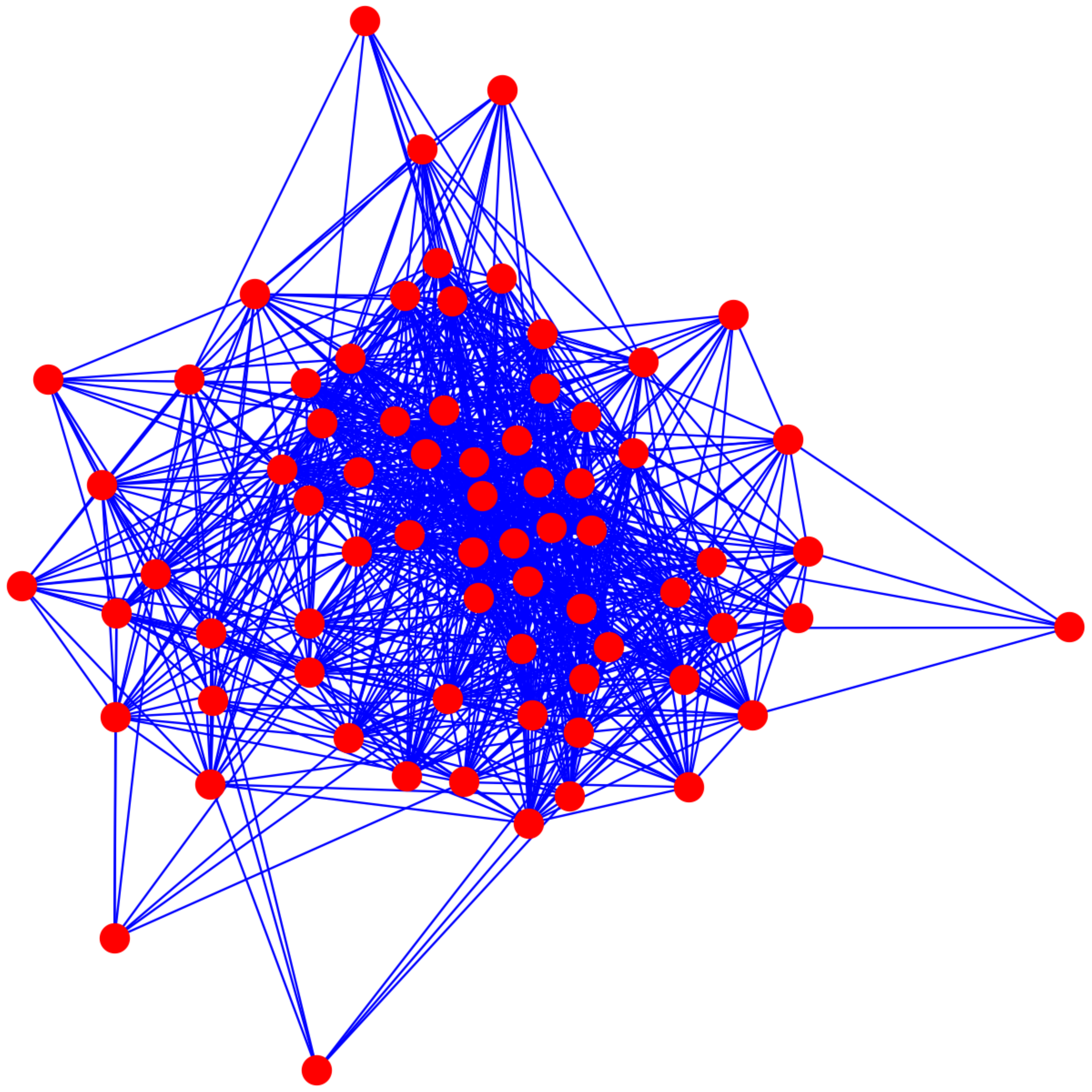}}
\caption{
We see the minimal free energy for the zero temperature case $\beta=1$, where $U$ is minimized. 
The complex $G$ is the Whitney complex of a random graph. 
Then to the right, we see the case of pure entropy $\beta=0$, the infinite temperature limit.
In the figure, the size of a vertex $x \in G'$ is drawn in size proportional to $p(x)$. 
}
\end{figure}

\section{Examples}

\paragraph{} If $G=K_2$, then 
$$ A' = \left[ \begin{array}{ccc} 1 & 1 & 1 \\ 1 & 1 & 0 \\ 1 & 0 & 1 \\ \end{array} \right]          $$
$$ g  = \left[ \begin{array}{ccc} -1 & 1 & 1 \\ 1 & 0 & -1 \\ 1 & -1 & 0 \\ \end{array} \right]  \; . $$
The minimal energy is obtained with $p=(3,2,2)/7$
The minimal entropy is $p=(1,1,1)/3$.
The Lagrange equations are
\begin{eqnarray*}
-T (\log(x_1)+1)-2 x_1+2 x_2+2 x_3 &=& \lambda \\
-T (\log(x_2)+1)+2 x_1-2 x_3 &=& \lambda \\
-T (\log(x_3)+1)+2 x_1-2 x_2 &=& \lambda \\
  x_1+x_2+x_3 &=& 1 \; .
\end{eqnarray*}
These are transcendental equations which we have to solve numerically. 

\paragraph{} If $G=K_3$, then 
$$ A' =  \left[ \begin{array}{ccccccc}
                   1 & 1 & 1 & 1 & 1 & 1 & 1 \\
                   1 & 1 & 0 & 0 & 1 & 1 & 0 \\
                   1 & 0 & 1 & 0 & 1 & 0 & 1 \\
                   1 & 0 & 0 & 1 & 0 & 1 & 1 \\
                   1 & 1 & 1 & 0 & 1 & 1 & 1 \\
                   1 & 1 & 0 & 1 & 1 & 1 & 1 \\
                   1 & 0 & 1 & 1 & 1 & 1 & 1 \\
                  \end{array} \right] \;  $$
$$ g =  \left[
                  \begin{array}{ccccccc}
                   1 & 1 & 1 & 1 & -1 & -1 & -1 \\
                   1 & 0 & 0 & 0 & 0 & 0 & -1 \\
                   1 & 0 & 0 & 0 & 0 & -1 & 0 \\
                   1 & 0 & 0 & 0 & -1 & 0 & 0 \\
                   -1 & 0 & 0 & -1 & 0 & 1 & 1 \\
                   -1 & 0 & -1 & 0 & 1 & 0 & 1 \\
                   -1 & -1 & 0 & 0 & 1 & 1 & 0 \\
                  \end{array} \right] \; . $$
The minimal energy for a measure with full support is obtained for 
$p=(7, 4, 4, 4, 6, 6, 6)/37$.
The minimal entropy is obtained with $p=(1,1,1,1,1,1,1)/7$.
The Lagrange equations are 
\begin{eqnarray*}
-T (\log(x_1)+1)+2x_1+2x_2+2 x_3+2 x_4-2 x_5-2 x_6-2 x_7&=&\lambda \\
-T (\log(x_2)+1)+2x_1-2x_7&=&\lambda \\
-T (\log(x_3)+1)+2x_1-2x_6&=&\lambda \\
-T (\log(x_4)+1)+2x_1-2x_5&=&\lambda \\
-T (\log(x_5)+1)-2x_1-2x_4+2 x_6+2x_7&=&\lambda \\
-T (\log(x_6)+1)-2x_1-2_3+2 x_5+2 x_7&=&\lambda \\
-T (\log(x_7)+1)-2x_1-2x_2+2 x_5+2 x_6&=&\lambda \\
x_1+x_2+x_3+x_4+x_5+x_6+x_7&=&1
\end{eqnarray*}
Also here, we have transcendental equations. We wrote down the equations
in full detail for $G=K_3$ to illustrate that also computer algebra systems can
not break the transcendental nature of the solutions. It would be great to
have explicit formulas for the possible solutions $p(\beta)$ minimizing the
Helmholtz free energy. As the problem is a finite dimensional variational problem,
rigorous bounds could be established using interval arithmetic. 

\paragraph{}
In the following picture, we look at the free energy $\beta U - T S$, where $T=1-\beta$.
When parameterized like this, we can go from $\beta=0$ to $\beta=1$. 

\begin{figure}[!htpb]
\scalebox{0.14}{\includegraphics{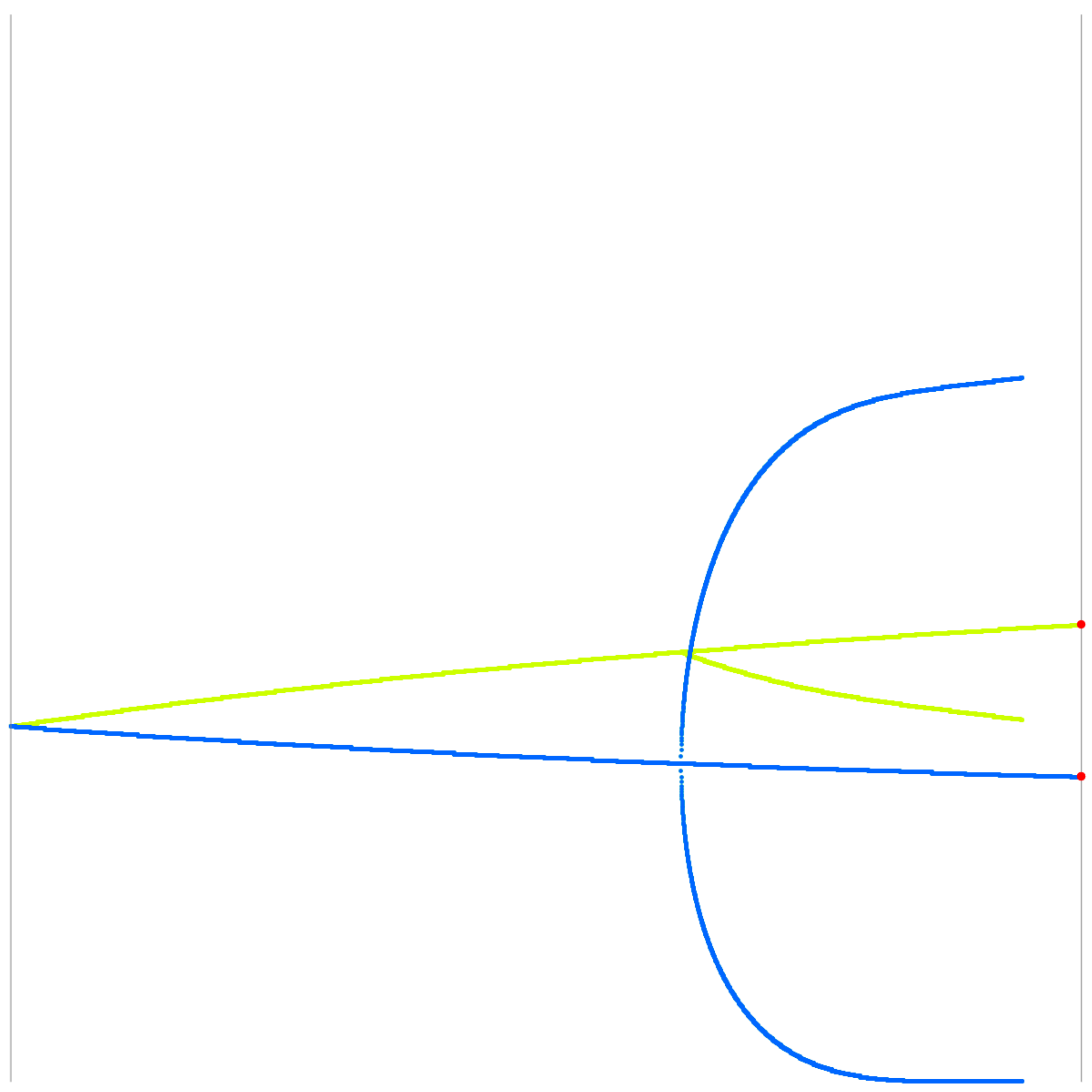}}
\scalebox{0.14}{\includegraphics{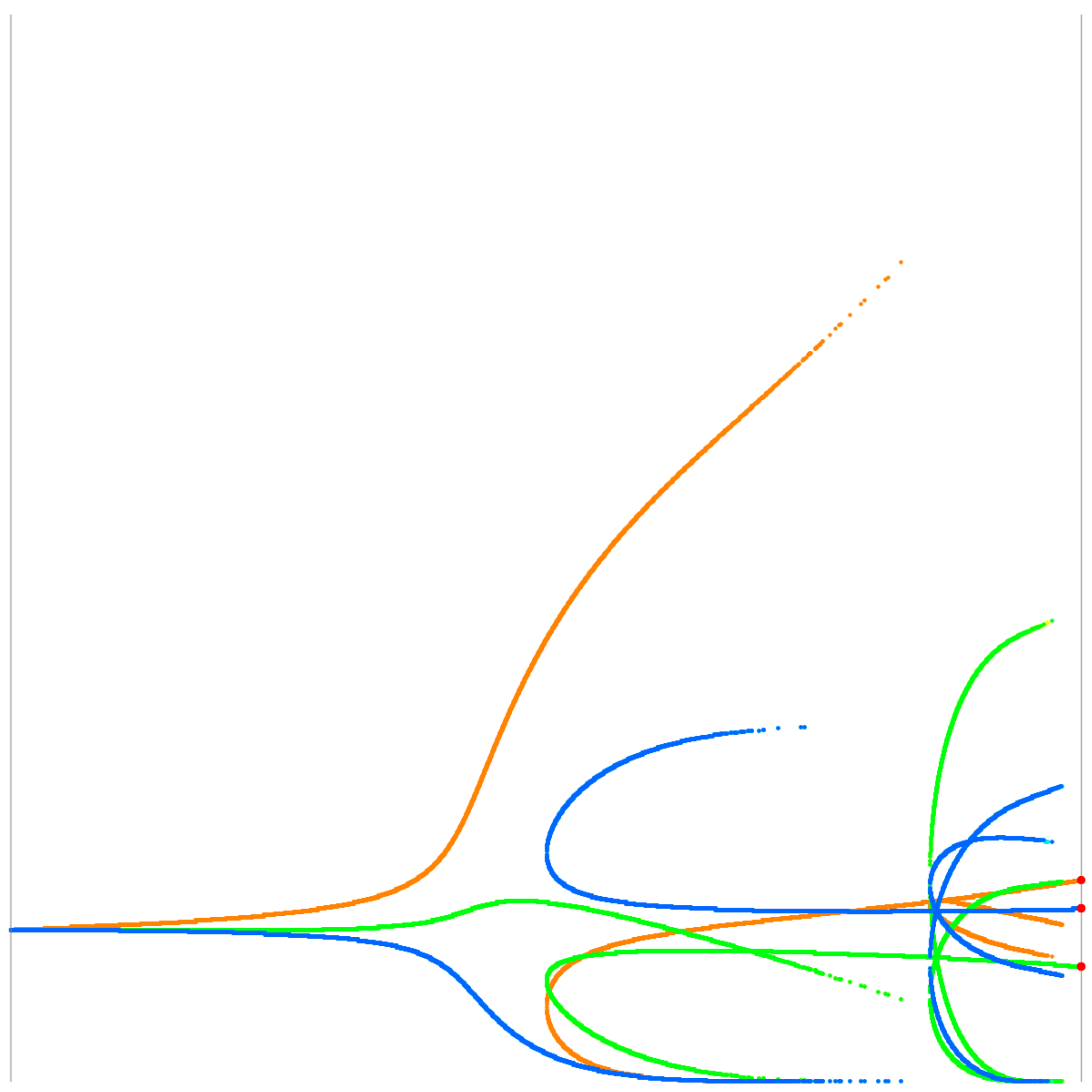}}
\caption{
The critical points $p(\beta)$ of the free energy $F(p) = \beta U(p) - (1-\beta) S(p)$ 
depending on inverse temperature $\beta$.
The left picture shows it for $G=K_2$ where one bifurcation value $\beta_1$ appears.
The right picture shows $\beta \to F(p(\beta),\beta)$ for $G=K_3$. We see two bifurcation
values $\beta_1,\beta_2$. The first is a {\bf saddle node bifurcation}, where two critical points
appear out of nothing, the second a {\bf pitchfork bifurcation}, where a critical point spans to
new critical points. For $\beta=0$, this is the pure entropy (high temperature) case, 
for $\beta=1$, it is the pure energy (zero temperature) case. Not all limiting measures
for $\beta \to 1$ have full support. 
}
\end{figure}

\section{Pushing curvature to $G$} 

\paragraph{}
For a finite simple graph $G$, let $V_i(x)$ denote the number of $K_{i+1}$
subgraphs in the unit sphere $S(x)$ of a vertex $x \in V$ with the understanding
that $V_{-1}(x)=1$. With the {\bf Euler curvature}
$$ K(v) = \sum_{k=0}^{\infty} \frac{(-1)^k V_{k-1}}{(k+1)} 
        = 1 - \frac{V_0}{2} + \frac{V_1}{3} - \frac{V_2}{4} + \cdots  $$
we have the Gauss-Bonnet formula $\sum_v K(v) = \chi(G)$ \cite{cherngaussbonnet}. 
The Euler characteristic $\chi(G)$ is
$$   \sum_{x \in V(G_1)} \omega(x)  \;  $$
so that $\omega(x) = (-1)^{{\rm dim}(x)}$ can be seen as a curvature too. If this
{\bf simplex curvature} $\omega(x)$ on a simplex is distributed equally to the
$k+1$ vertices of $x$, we get at each vertex the value $K(x)$. This is already the proof of
Gauss-Bonnet. It appeared in \cite{Levitt1992} but was not labeled as a Gauss-Bonnet result.

\paragraph{}
As we have seen that if $G$ was a finite simple curvature,  
the {\bf Euler curvature} was obtained by pushing the stable curvature $\omega(x)$
from the simplices $x$ to the vertices $v$. We can do the same than the curvature $K^+(x)$ 
and get the {\bf unstable Euler curvature}
$$  \tilde{K}(v) = \sum_{x, v \in x} (-1)^{\dim(x)} (1-\chi(S(x)))/|x|  \;. $$
As $\tilde{K}$ is a second order difference operator and only involves Euler characteristic again,
this looks like an interesting curvature, but

\begin{coro}
The unstable Euler curvature is the same than Euler curvature
$\tilde{K}(v) = K(v)$.
\end{coro}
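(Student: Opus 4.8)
The plan is to reduce the claim to an elementary identity by first recording a clean combinatorial formula for the unstable curvature. The central observation is that for every face $x$ of $G$,
\[
 K^+(x) \;=\; \sum_{y\supseteq x} (-1)^{|y|-|x|},
\]
the sum running over all faces $y$ of $G$ with $y\supseteq x$ (including $y=x$). To obtain this, recall from the proof of the Gauss--Bonnet theorem above that $K^+(x)=(-1)^{\dim x}\,(1-\chi(S(x)))$ with $S(x)$ the unit sphere of $x$ in $G_1$, that $S(x)$ is the join of the boundary complex $\partial x$ of the simplex $x$ with the order complex of the cofaces $\{y : x\subsetneq y\}$, and that $i(\cdot)=1-\chi$ is multiplicative over joins. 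The order complex of the cofaces of $x$ is the Barycentric refinement of the link $\mathrm{lk}_G(x)$, so it has Euler characteristic $\chi(\mathrm{lk}_G(x))$; since $i(\partial x)=(-1)^{\dim x}$, multiplicativity gives $K^+(x)=1-\chi(\mathrm{lk}_G(x))$. Expanding $\chi(\mathrm{lk}_G(x))=\sum_{\emptyset\neq\tau}(-1)^{\dim\tau}=-\sum_{y\supsetneq x}(-1)^{|y|-|x|}$ over nonempty faces $\tau$ of the link then yields the displayed formula.

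Second, I substitute this into $\tilde K(v)=\sum_{x\ni v}K^+(x)/|x|$ and interchange the two summations. Every pair $(x,y)$ with $v\in x\subseteq y$ forces $v\in y$, and for such $y$ the face $x=\{v\}$ always qualifies, so
\[
 \tilde K(v)\;=\;\sum_{y\ni v}(-1)^{|y|}\sum_{v\in x\subseteq y}\frac{(-1)^{|x|}}{|x|}.
\]
For fixed $y$ with $m:=|y|$, the faces $x$ with $v\in x\subseteq y$ are exactly $\{v\}\cup T$ with $T\subseteq y\setminus\{v\}$, so the inner sum equals $-\sum_{t=0}^{m-1}\binom{m-1}{t}\frac{(-1)^t}{t+1}$. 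Using $\binom{m-1}{t}/(t+1)=\binom{m}{t+1}/m$ and $\sum_{s=0}^{m}\binom{m}{s}(-1)^s=0$, this collapses to $-1/m=-1/|y|$. Hence
\[
 \tilde K(v)\;=\;-\sum_{y\ni v}\frac{(-1)^{|y|}}{|y|}\;=\;\sum_{y\ni v}\frac{(-1)^{\dim y}}{|y|}\;=\;\sum_{y\ni v}\frac{\omega(y)}{|y|},
\]
which is precisely the value obtained by distributing the simplex curvature $\omega(y)=(-1)^{\dim y}$ equally among the $|y|$ vertices of $y$ and collecting the contributions at $v$, i.e.\ the Euler curvature $K(v)$ as recalled in the paragraphs just above. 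This proves $\tilde K(v)=K(v)$.

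The main obstacle, and the only real content, is the first step: establishing $K^+(x)=\sum_{y\supseteq x}(-1)^{|y|-|x|}$. This rests on the join decomposition of unit spheres in $G_1$ together with the fact that the order complex on the cofaces of $x$ is the Barycentric refinement of $\mathrm{lk}_G(x)$, both of which should be stated carefully, especially for the empty-face bookkeeping ($x=\{v\}$, where $\mathrm{lk}_G(\{v\})=S(v)$ and the term corresponds to $V_{-1}=1$). After that, the argument is a Fubini interchange plus the one-line binomial identity $\sum_{t=0}^{m-1}\binom{m-1}{t}(-1)^t/(t+1)=1/m$. An essentially equivalent route keeps everything localized to the unit sphere $H=S(v)$: one writes $K^+(\{v\}\cup\sigma)=1-\chi(\mathrm{lk}_H(\sigma))$ for a clique $\sigma$ of $H$, reindexes each pair consisting of $\sigma$ and a nonempty clique $\tau$ of $\mathrm{lk}_H(\sigma)$ by the clique $\rho=\sigma\sqcup\tau$ of $H$, and applies the same identity once per $\rho$; this also makes transparent that, like $K(v)$, the quantity $\tilde K(v)$ depends only on the isomorphism type of $S(v)$.
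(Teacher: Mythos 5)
Your argument is correct, but it follows a genuinely different route from the paper's. The paper settles the corollary in two sentences by appealing to index expectation: the Euler curvature $K(v)$ is the average of Poincar\'e--Hopf indices $i_f^-(v)$ over a probability measure on Morse functions invariant under $f \mapsto -f$, the unstable version is the corresponding average of $i_f^+(v) = i_{-f}^-(v)$, and the symmetry of the measure forces the two averages to coincide. That is conceptual and short, but it leans on external results from \cite{indexexpectation} and leaves implicit the identification of the push-forward of $K^+$ with such an average. You instead give a self-contained computation: the closed form $K^+(x) = \sum_{y \supseteq x} (-1)^{|y|-|x|} = 1 - \chi(\mathrm{lk}_G(x))$, derived from the join decomposition of $S(x)$ in $G_1$ and the multiplicativity of $1-\chi$ under joins, followed by a Fubini interchange and the binomial identity $\sum_{t=0}^{m-1} \binom{m-1}{t} (-1)^t/(t+1) = 1/m$, which collapses $\tilde{K}(v)$ to $\sum_{y \ni v} (-1)^{\dim y}/|y| = K(v)$. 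I verified the combinatorics, including the boundary cases where $x$ is a facet or a vertex, and it checks out. What your version buys is a complete, verifiable proof together with a useful intermediate formula expressing $K^+(x)$ as an alternating coface count (which incidentally re-proves the Gauss--Bonnet theorem of Section 2 by a one-line double count, without invoking Poincar\'e--Hopf), and it makes transparent that $\tilde{K}(v)$ is determined by the isomorphism type of $S(v)$; what the paper's version buys is the conceptual explanation via the $f \mapsto -f$ symmetry, which generalizes immediately to other symmetric measures on Morse functions. Your route also quietly sidesteps the paper's ambiguity about which of $S^+(x)$, $S^-(x)$ is the skeleton of $x$ and which is the coface part, since you work directly with the link.
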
 
\begin{proof}
We see that by index averaging with a measure which is invariant
under $f \to -f$. The Euler curvature is an average over all functions $f$,
the unstable Euler curvature is an average over all functions $-f$.  
\end{proof}

\paragraph{}
Given a function $f$, we can push the curvature $K^-(x)$ from the simplices $x$ to the 
vertex $v$ in $x$ for which $f$ is maximal. 
This gives the Poincar\'e-Hopf index $1-S^-(x)$ \cite{poincarehopf}. 
This can also be done for $K^+(x)$, where it pushes the Poincar\'-Hopf index for $f$ on $G_1$ with a function $g$
on $G$ to the vertices of $G$. This gives then an curvature on the vertices of $G$. 

\section{Open problems}

{\bf A)} We still don't have a topological description of the entries $g(x,y)$ for $x \neq y$. 
We know that $g(x,y)$ is only non-zero if the unstable connection $W(x,y$ is not empty. 
We see this from the fact that $v=g e_x$ is the $x$ column of $g$. Then $L' v = e_x$. \\

{\bf B)} The bifurcation scheme of the free energy variational problem given by the 
Helmholtz free energy $F(p)= \beta U(p) - T S(p)$ appears to be interesting even in 
very concrete examples. The discontinuity of the function $\beta \to F(p_{min},\beta)$
can for every simplicial complex be determined case by case but it would be nice to know
where the bifurcation parameters $\beta_k(G)$ are, especially in the limit of Barycentric refinement.
The first bifurcation $\beta_1$ appears to grow with larger dimension.
Is there are Feigenbaum type universality \cite{Feigenbaum1978}? 

{\bf C)} We certainly have to explore more the nature of the Helmholtz free energy function 
$$  F(p,\beta) = \beta \sum_{x,y} g(x,y) p(x) p(y) + (1-\beta) \sum_x p(x) \log(p(x)  $$
for a simplicial complex $G$ with Green function $g=L'^{-1}=(1+A')^{-1}$
We see at critical points $p$ that $F_{\beta}(p(\beta),\beta) \leq 0$ and
$F(p(\beta),\beta)>0$ but more experiments are necessary. Having seen 
catastrophes \cite{StewartCatastrophe} already for $G=K_3$,
bifurcations, where the lowest free energy jumps discontinuously to a lower level. 
If $G$ has $n$ simplices meaning that $g$ is a $n \times n$ matrix and $p$ 
a stochastic vector in $\RR^n$,
then $F(p,0)=\log(n)$ and $F(p,1)=2 |E(G')|+|V(G')|=1/\lambda$, 
where $\lambda$ is the maximal eigenvalue of the {\bf San Diego Laplacian}
$\tilde{L}$ rsp the ground state energy of the San Diego Green function. So far we
have always seen that at all $\beta$ away from bifurcation values
and for all branches of the bifurcation, the function 
$\beta \to \partial_{\beta} F(p,\beta)$ is negative. \\

{\bf D)} We expect universal phenomena for suitably rescaled measures $p_{\infty}$ 
in the Barycentric limit $G_{\infty}$. 
A model close to a Barycentric refinement limit is the Hierarchical model by Dyson. \\

{\bf E)} A long shot is the hope that the Helmholtz free energy functional on simplicial 
probability complexes selects out interesting geometries $G$ and probability distributions
$\psi$ near temperatures selected out naturally. One can experiment then with physics of the
``gravity waves" $\psi(t) = e^{i L't} \psi$ similarly to the wave equations for 
the Hodge Laplacian $L=D^2$. For the later the solutions are given by Helmholtz 
$e^{i D} \psi= \cos(Dt) \psi(0) + i \sin(Dt) D^{-1} \psi'(0)$
solving the wave equation $\psi''=-L \psi$ responsible for non-gravitational parts. \\

{\bf F)} We still don't know whether the Fredholm operator $L'=(1+A')$ can be 
characterized somehow as the only $L$ which universally for all simplicial complexes
has a bounded inverse and which has the property that $L_{xy}=0$ if $x,y$ are 
disjoint. One could imagine for example to have $L_{xy}$ depend on the dimensions of
the simplices $x$ and $y$. \\

\begin{figure}[!htpb]
\scalebox{0.13}{\includegraphics{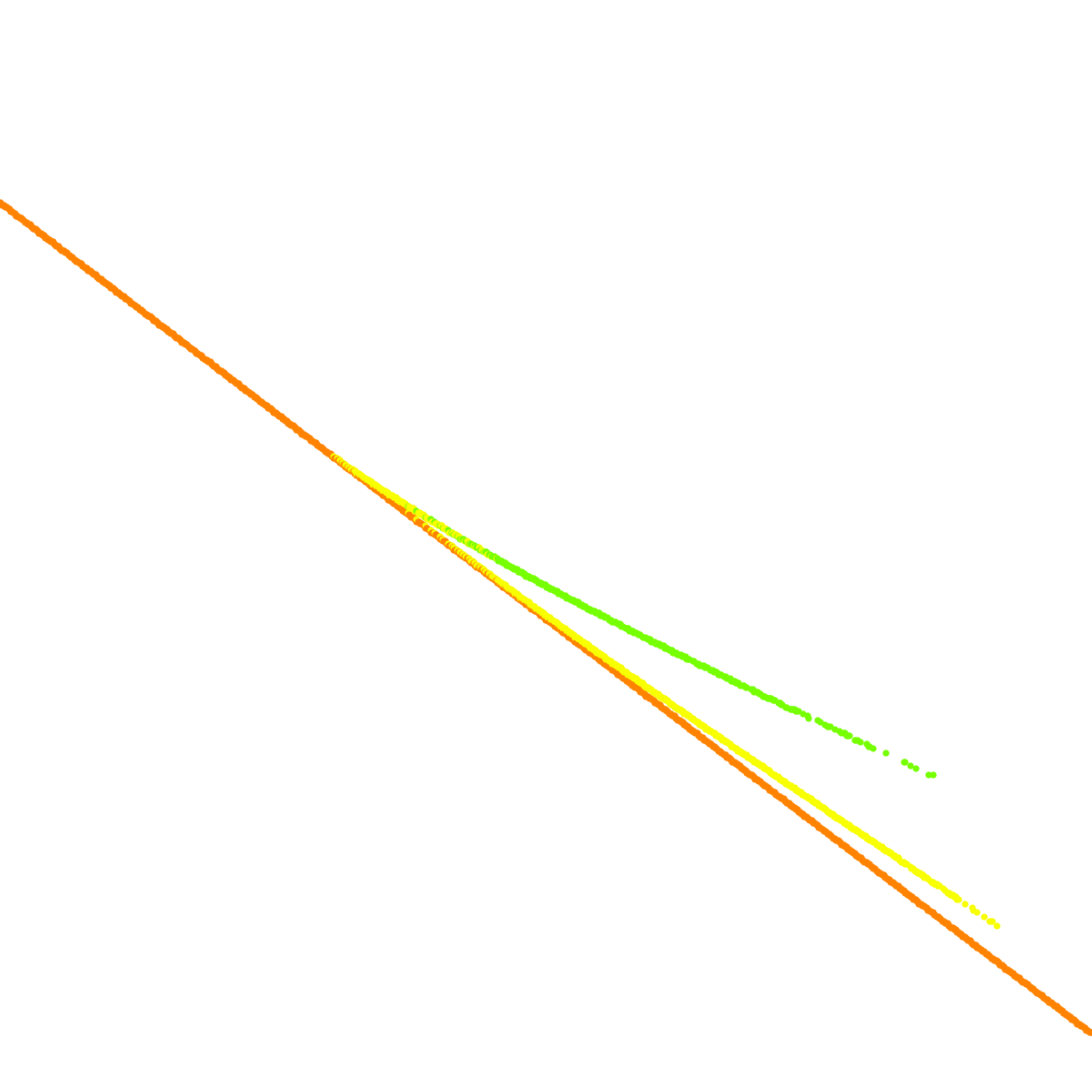}}
\scalebox{0.13}{\includegraphics{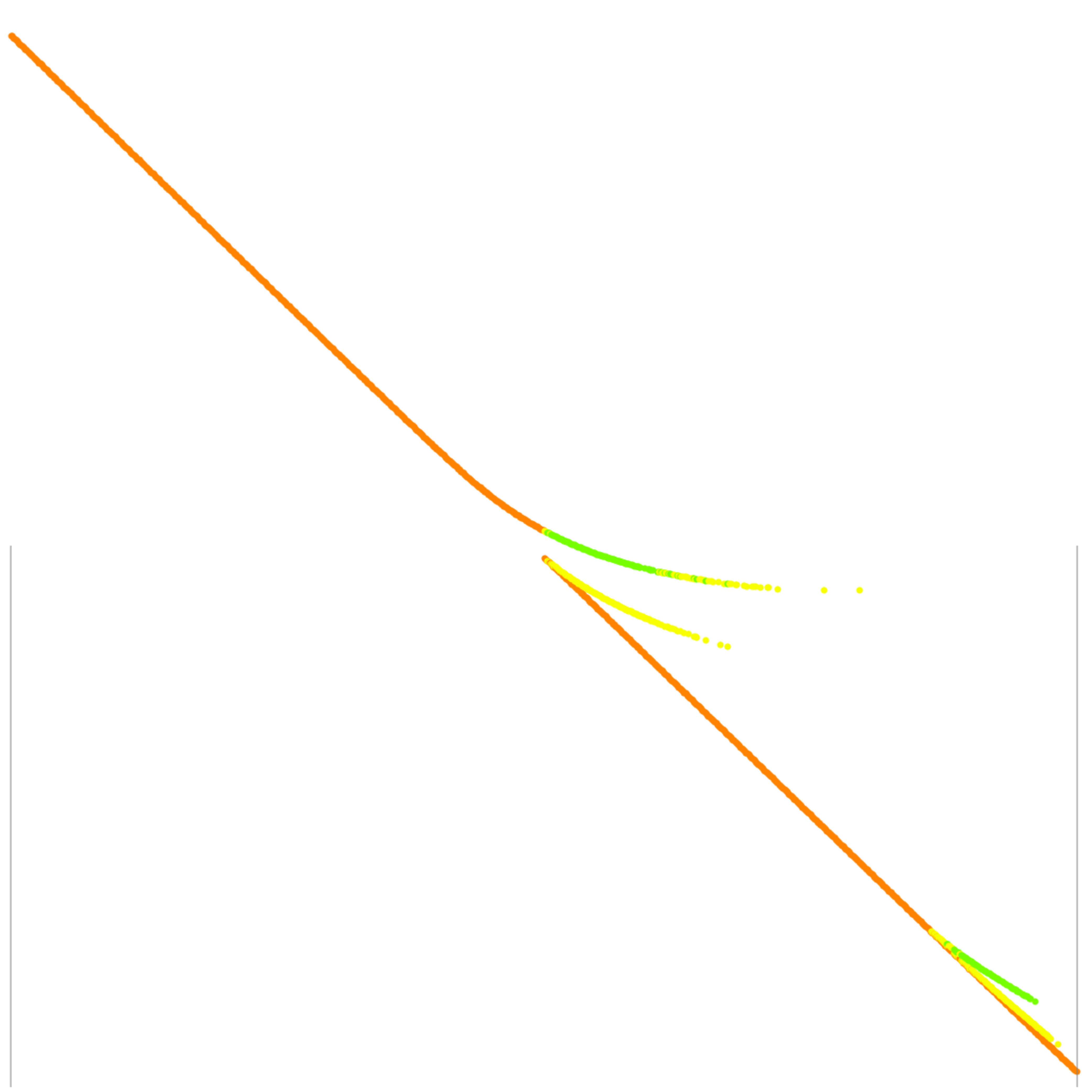}}
\caption{
We see $F(p,\beta)$ as a multi-valued function of $\beta \in [0,1]$ for $K_2$ and $G=K_3$. 
To the left, for $K_2$, we know $F(p,0)=\log(3)=1.099\dots$ and a critical point $p$ with $F(p,1)=1/7$ as
$2|E(G')|+|V(G')|=7$. 
For $K_3$ we know $F(p,0)=\log(7)=1.94\dots$, and a critical point $p$ with $F(p,1) = 1/37$ as
$2|E(G')|+|V(G')|=37$. More critical points are at the zero temperature limit $\beta=1$, 
as $p$ can be supported on a subgraph of $G'$. 
}
\end{figure}

\begin{figure}[!htpb]
\scalebox{0.13}{\includegraphics{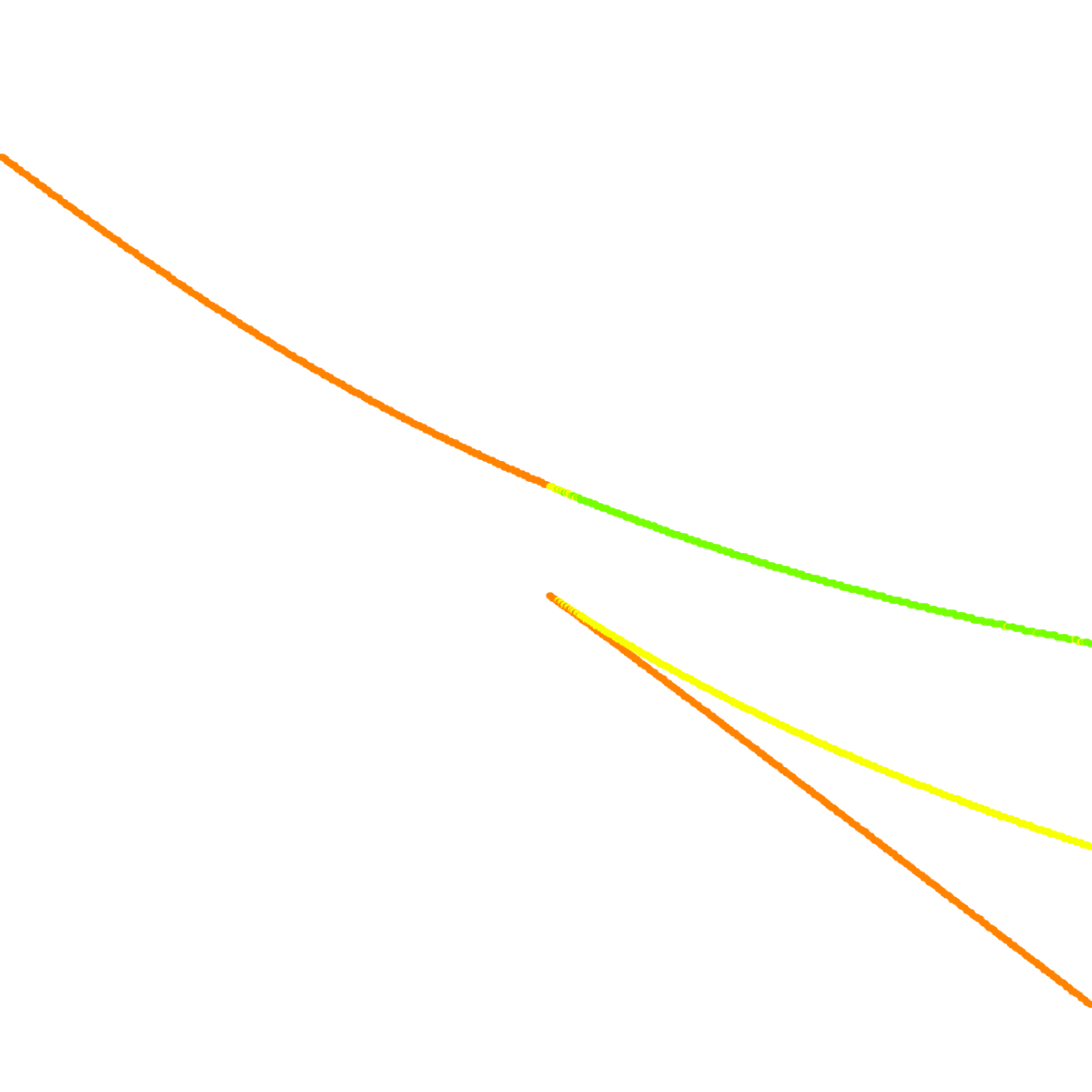}}
\scalebox{0.13}{\includegraphics{figures/freeenergy2.pdf}}
\caption{
We see the free energy $F$ for $G=K_3$ near the {\bf catastrophe} parameters. 
Again, $F$ is multi-valued due to the presence of different critical points in
some intervals. Catastrophe values are defined as 
parameter values $\beta_k$ at which the number of equilibrium points for $\beta-\epsilon$ and 
$\beta+\epsilon$ are different if
$\epsilon>0$ is small enough. There are three local extrema of the free energy functional $F(p,\beta)$ then. 
There is a second bifurcation, which is a pitchfork bifurcation for the measures. 
}
\end{figure}

\section{Appendix: on Euler characteristic and Shannon Entropy}

\paragraph{}
There would be many ways to modify or extend the Helmholtz functional $p \to F(p,\beta)$.
One could replace the potential energy $U$ with {\bf enthalpy} $U+P V$, where $P$ is an additional
{\bf pressure variable} and $V(G)$ is the {\bf volume}, the number of facets in $G$. This leads
to the {\bf Gibbs free energy}. An other possibility 
is to let the probability space be a space of fields $\psi$ rather than probabilities $p=|\psi|^2$ 
and using a {\bf Heisenberg interaction energy}
$\sum_{x,y} A'_{xy} \psi(x) \cdot \psi(y)$. We focus on an energy functional because 
$U(1) = \sum_{x,y} g(x,y) = \chi(G)$ is the Euler characteristic, which enjoys a nice uniqueness 
characterization. A similar uniqueness characterization applies also to the Shannon entropy 
functional $S$. We will outline this characterization here in this appendix. 

\paragraph{}
Before we point out the similarities between Euler characteristic and entropy, 
one can ask why it is natural to let energy and entropy compete in the form of the functional $F$. 
Helmholtz considered it a useful notion as it is relevant to various processes, especially in
chemistry, physics and cosmology; but Planck realized that the notion of entropy
has no accurate definition in cosmological terms. The usual informal definition as
a measure of the ``number of microscopic configurations" but that needs a finite 
probability space or a finite partition of the entire probability space and a conditional probability.
For absolutely continuous probability measures, the notion of {\bf differential entropy} makes
sense. A quantum mechanical version, the {\bf von Neumann entropy} 
deals with {\bf density operators} $P$, self adjoint operators
for which the eigenvalues $\lambda_j>0, j \in \NN$
add up to $1$. Then $S=-{\rm tr}(P \log(P))$ is the Shannon entropy 
for the probability measure on $\NN$ given by $p_i = \lambda_i$.

\paragraph{}
An other point of view came with Boltzmann who replaced the {\bf microcanonical ensemble}, the 
invariant measure on an energy surface of a Hamiltonian system with the
{\bf canonical ensemble} in which energy is no longer fixed but a new temperature variable like $\beta$ 
is introduced. On a calculus of variation level, it means replacing the energy functional 
with a Helmholtz free energy. While mathematical texts define it as such 
\cite{RuelleStatMech}, physics motivates
it by imagining the physical system placed into a ``heat bath" which means coupling it with a stochastic
system. But replacing the exact velocities of the particles with a statistical distribution
which depends on an inverse temperature parameter $\beta$ is a rather large step, more so than mean
field approaches. It is an emergent definition, similarly as Navier Stokes is an emergent PDE model from 
a $n$ body problem. Justifying the step would require to establish hyperbolicity for the mechanical system
which is not possible as for smooth interaction potentials, KAM theory has destroyed any hope as
there are often tiny parts of the phase space on which the dynamics remains integrable. Only in 
few cases, one has been able to prove ergodicity or even establish positive Kolmogorov-Sinai entropy. 

\paragraph{}
A finite abstract simplicial complex $G$ is a collection of non-empty sets closed under the process of 
taking non-empty subsets. A finite probability space is a finite set equipped with the algebra of
all subsets as events and a probability measure $p$. In the following, we just say simplicial complex 
or probability space when meaning finite abstract simplicial complex or finite probability space. 
What is a natural probability distribution on $G$? The most obvious one is to take the lowest energy 
state $\psi$ of the Laplacian  and take $p=|\psi|^2$. For the scalar Kirchhoff Laplacian $L_0=D-A$,
the constant distribution minimizes the energy. For the Fredholm connection matrix $1+A'$, we get more
interesting probability distributions and we have chosen to minimize $U(p) =\sum_{x,y} g(x,y) p(x) p(y)$
and we try here to justify the choice as $\sum_{x,y} g(x,y) = \chi(G)$ is a natural functional. 
Combining it with an other natural functional $S(p) = -\sum_x p(x) \log(p(x))$ is then natural too. 

\paragraph{}
In order to parallel Euler characteristic $\chi$ and entropy $S$ we restrict the functionals. 
On simplicial complexes, lets look at {\bf valuations}
on one side, functionals $\phi(G)$ of the form $\phi(G) = X \cdot f(G) = \sum_x X_{{\rm dim}(x)}$, where 
$f(G)$ is the $f$-vector of $G$ and $X=(X_0, \dots, X_{{\rm dim}(G)})$.
For a probability space $p$, we look at functionals $\phi(p) = \sum_x p(x) g(p(x))$, where $g$ is some function. 
In the following, if we say ``functional", we always mean a functional of this type in both cases.

\paragraph{}
The {\bf Cartesian product} of two finite abstract simplicial complexes $G$ and $H$ 
is the order complex of the Cartesian product $G \times H$. The {\bf Cartesian product} of two 
finite probability spaces $p$ on a finite set $G$ and $q$ on a finite set $H$ is the measure on 
the Cartesian product $G \times H$ of sets where the measure is defined by 
$p \times q( (x,y) ) = p(x) q(y)$. 

\paragraph{}
We say that a functional on simplicial complexes is multiplicative if $\phi(G \times H) = \phi(G) \phi(H)$.
We say that a functional on probability spaces is multiplicative if $\phi(p \times q) = \phi(p) \phi(q)$. 
Euler characteristic is an example of a multiplicative functional on simplicial complexes. 
The exponential of Shannon entropy is an example of a multiplicative functional on probability spaces. 

\paragraph{}
We say a functional on simplicial complexes is normalized if $\phi(K_1)=1$. 
We say a functional on probability measures is normalized if $\phi(p)=1$ if $p$ is a
probability measure supported on a single point. Euler characteristic is an example of a normalized
functional on simplicial complexes. The exponential of Shannon entropy is an example of a normalized
functional on probability spaces. 

\paragraph{} Euler characteristic is very natural, at least when restricting to 
valuations. We have seen that Wu characteristic is natural too when allowing
multi-linear valuations \cite{valuation}. 

\begin{propo}
Any multiplicative normalized functional on simplicial complexes is 
Euler characteristic.
\end{propo}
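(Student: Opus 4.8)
The plan is to pin down the functional $\phi$ by evaluating it on the simplest complexes and then bootstrapping via multiplicativity. First I would recall that a valuation here has the form $\phi(G) = \sum_x X_{\dim(x)}$ for a fixed coefficient vector $X = (X_0, X_1, \dots)$. Set $x_k = X_k$. The key observation is that the ``point'' complexes $P_k$, the complete complex on $k+1$ vertices whose simplices are all non-empty subsets (so $P_0 = K_1$, $P_1$ is the Whitney complex of $K_2$, etc.), have a simple structure under Cartesian product, and that any valuation is additive over disjoint unions in a suitable sense. Normalization gives $\phi(K_1) = 1$, i.e. $x_0 = 1$.

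The core step is to determine $x_k$ for all $k$ using multiplicativity on a well-chosen family. I would use the fact that Euler characteristic is itself a multiplicative normalized functional (stated in the excerpt), and that the difference $\psi = \phi - \chi$ is a valuation — the space of valuations is a vector space, and both $\phi$ and $\chi$ restrict to it — which vanishes on $K_1$. Then I would show $\psi$ must vanish identically. Concretely, Euler characteristic corresponds to the coefficient vector $X_k^\chi = (-1)^k$. Evaluating $\phi$ on the one-simplex complexes: $\phi$ of the $0$-dimensional complex with $m$ points is $m x_0 = m$ (forced, since that complex is also a disjoint union and the $f$-vector argument gives $m x_0$). For the standard $n$-simplex $\Delta^n$ (i.e. $P_n$), the $f$-vector has $\binom{n+1}{k+1}$ faces of dimension $k$, so $\phi(\Delta^n) = \sum_{k=0}^n \binom{n+1}{k+1} x_k$. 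Now $\Delta^n$ is the order complex / Cartesian-type build one gets from products of $\Delta^1$'s only in the Euler-characteristic-trivial sense ($\chi(\Delta^n) = 1$ for all $n$), so I need a genuinely multiplicative relation among the $\Delta^n$ or among small complexes whose products are again in the list; the cleanest is to use that $\chi$ is multiplicative and that $\phi(G\times H) = \phi(G)\phi(H)$ forces the generating function $\sum_k x_k t^{k}$ (suitably indexed) to satisfy a functional equation whose only solution with $x_0 = 1$ is $x_k = (-1)^k$.

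The main obstacle — and the step I would spend the most care on — is justifying that $\phi$ being multiplicative on the \emph{Cartesian product of order complexes} actually constrains the coefficient vector $X$ at all, since the $f$-vector of $G\times H$ (as an order complex) is a fairly intricate function of the $f$-vectors of $G$ and $H$, not a simple convolution. Here I would lean on the known fact that Euler characteristic $\chi(G) = \sum_x (-1)^{\dim x}$ \emph{is} multiplicative for this product (used earlier in the paper for the Zykov join / product and cited as an example), combine it with the classification of multiplicative invariants of order complexes, and argue that the ring of valuations modulo the multiplicativity relation is one-dimensional, spanned by $\chi$; then normalization $\phi(K_1)=1$ fixes the scalar to $1$, giving $\phi = \chi$. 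If a cleaner self-contained route is wanted, one computes $\phi$ on $\Delta^1 \times \Delta^1$, $\Delta^1 \times \Delta^2$, etc., extracts enough linear relations on $x_1, x_2, x_3, \dots$ from $\phi(\Delta^m\times\Delta^n) = \phi(\Delta^m)\phi(\Delta^n) = 1\cdot 1 = 1$ (once one knows $\phi(\Delta^n)=1$, which itself follows inductively because $\Delta^{n}$ is a cone and cones over a complex $K$ satisfy $\phi(\text{cone}(K)) = \phi(K) + \text{(top terms)}$, pinned down by the product relation), and checks these force $x_k = (-1)^k$. I expect the cone / product interplay to be the delicate bookkeeping, but no single step is deep.
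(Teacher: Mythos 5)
There is a genuine gap at the decisive step. Your endgame is fine: if you knew $\phi(\Delta^n)=1$ for all $n$, the triangular system $\sum_{k=0}^{n}\binom{n+1}{k+1}x_k=1$ together with $x_0=1$ does force $x_k=(-1)^k$ by induction, hence $\phi=\chi$. But you never actually establish $\phi(\Delta^n)=1$, nor any other effective constraint. At the critical moment you either invoke ``the classification of multiplicative invariants of order complexes'' and the claim that ``the ring of valuations modulo the multiplicativity relation is one-dimensional, spanned by $\chi$'' --- which is precisely the statement to be proved --- or you appeal to an unproved cone formula $\phi(\mathrm{cone}(K))=\phi(K)+(\text{top terms})$; the cone is not a Cartesian product in the sense used here, so multiplicativity gives you nothing about it, and $\phi$ is not assumed to be a homotopy invariant, so contractibility of $\Delta^n$ is of no help. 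You correctly identify that the $f$-vector of the order-complex product $G\times H$ is an intricate function of $f_G$ and $f_H$, but you never resolve that obstacle.

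The missing idea, which is the whole content of the paper's proof, is the special case $H=K_1$: the Cartesian product $G\times K_1$ is the order complex of $G$ itself, i.e.\ the Barycentric refinement $G_1$. Multiplicativity plus normalization then give $\phi(G_1)=\phi(G)\,\phi(K_1)=\phi(G)$, so $\phi$ is invariant under Barycentric refinement. Since $f_{G_1}=A f_G$ for an explicit (triangular) refinement operator $A$, the coefficient vector must satisfy $A^{T}X=X$, and $A^{T}$ has a one-dimensional eigenvalue-$1$ eigenspace spanned by $(1,-1,1,\dots)$; normalization fixes the scalar. Note that this same observation would rescue your own route, since $\Delta^0\times\Delta^n=(\Delta^n)_1$ and refinement invariance lets you propagate $\phi(\Delta^0)=1$ --- but as written, the constraint that pins down $X$ is never derived.
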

\begin{proof}
The multiplicative property implies especially that $\phi$ is invariant
under Barycentric refinements as $G_1=G \times K_1$ \cite{KnillProduct}. 
There is an explicit 
Barycentric refinement operator $A$ which maps $f_G$ to $f_{G_1}$. 
This operator has only one eigenvector $X=(1,-1,1, \dots)$  \cite{valuation}
implying that $\phi$ must be the Euler characteristic. 
\end{proof}

\paragraph{} Also the Shannon entropy functional is natural. There is a tiny
ambiguity about the choice of the base of the logarithm but this can be bootstrapped
once we know that entropy is a natural functional:

\begin{propo}[Shannon]
Any multiplicative normalized functional on probability spaces must
be some exponential of entropy. 
\end{propo}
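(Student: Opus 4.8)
The plan is to follow the classical Shannon/Faddeev route, but using the multiplicativity and normalization hypotheses (rather than Shannon's recursive grouping axiom) as the driving input. Write $\phi(p)=\prod_x h(p(x))$ only formally; the substance is to extract from $\phi$ a function $f$ on $[0,1]$ so that $\log\phi(p)=\sum_x f(p(x))$, and then pin down $f$. First I would restrict attention to uniform distributions $u_n$ on an $n$-point set. Since $u_{mn}=u_m\times u_n$ as probability spaces, multiplicativity gives $\phi(u_{mn})=\phi(u_m)\phi(u_n)$, so $a(n):=\log\phi(u_n)$ satisfies $a(mn)=a(m)+a(n)$, and normalization gives $a(1)=0$. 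A monotonicity or boundedness input (which one has here because $\phi$ is a functional of the stated restricted form $\phi(p)=\sum_x p(x)g(p(x))$, hence continuous in $p$) forces $a(n)=c\log n$ for a constant $c$, by the standard Erdős-type argument comparing $a(n)$ with $a(2)$ via base-$2$ expansions. This already identifies $\phi$ on uniform distributions with $e^{cS}$.

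Next I would pass to rational distributions. Given $p=(q_1,\dots,q_k)$ with $q_i=m_i/m$, realize the uniform space $u_m$ as a Cartesian-type composite: group the $m$ atoms into $k$ blocks of sizes $m_i$. The key algebraic step is a ``chain rule'' for $\phi$ under this grouping, which must be deduced from multiplicativity together with the assumed additive form $\log\phi(p)=\sum_x f(p(x))$ of the functional. Concretely, the hypothesis that $\phi$ is a functional of the paper's type means $\log\phi(p)=\sum_x p(x)\tilde g(p(x))$ for some $\tilde g$, i.e. $f(t)=t\tilde g(t)$; multiplicativity $\log\phi(p\times q)=\log\phi(p)+\log\phi(q)$ then reads $\sum_{x,y} f(p(x)q(y)) = \sum_y q(y)\sum_x f(p(x)) + \sum_x p(x)\sum_y f(q(y))$ after using $\sum p(x)=\sum q(y)=1$ — provided $f$ also satisfies $f(ts)=t f(s)+s f(t)$ pointwise, which is exactly the functional equation solved by $f(t)=ct\log t$. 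So the real work is to show that the only $f$ making $\sum_x f(p(x))$ multiplicative across all products is, up to the normalization-fixed constant, $f(t)=ct\log t$; combining with $a(n)=c\log n$ from the first step fixes the sign and shows $c<0$ gives entropy (or $c>0$ an ``anti-entropy'', which the normalization $\phi\ge 1$-type convention on a single point rules in/out as the paper wishes), hence $\phi=e^{cS}$.

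The main obstacle I expect is precisely isolating the right regularity/structure hypothesis that upgrades the functional equation from rationals to all of $[0,1]$ and excludes pathological (non-measurable) solutions of $f(ts)=tf(s)+sf(t)$. In Shannon's original treatment this is handled by an explicit continuity or monotonicity axiom; here it should come for free from the paper's standing convention that a ``functional'' on probability spaces has the form $\sum_x p(x)g(p(x))$ with $g$ a genuine function, so that density of rationals plus continuity of $t\mapsto tg(t)$ closes the gap. I would therefore state at the outset the precise class of functionals being quantified over, invoke continuity, prove the identity on rationals via the grouping/chain-rule computation above, and conclude by density. The closing remark would note the harmless base-of-logarithm ambiguity — the constant $c$ — which the paper already flags and which is fixed by declaring the base, completing the parallel with the Euler-characteristic uniqueness statement proved just above.
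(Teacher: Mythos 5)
Your route is the classical Shannon--Faddeev one (uniform distributions, $a(mn)=a(m)+a(n)$ with an Erd\H{o}s-type argument, then rationals by grouping, then density), whereas the paper's proof is a two-line direct computation: it expands the multiplicativity requirement for a product of two two-point spaces,
$$pa\,g(pa)+pb\,g(pb)+qa\,g(qa)+qb\,g(qb)=(p\,g(p)+q\,g(q))(a\,g(a)+b\,g(b)) \; ,$$
reads off a pointwise Cauchy-type functional equation for $g$, and quotes Shannon's Theorem 2. Measured against that, about half of your machinery is redundant: once you have reduced, as you do in your third paragraph, to the functional equation $f(ts)=tf(s)+sf(t)$ with $f(t)=t\tilde g(t)$ (equivalently $\tilde g(ts)=\tilde g(t)+\tilde g(s)$), plus the continuity that comes for free from the standing convention on what a ``functional'' is, the uniform-distribution and grouping steps are no longer needed. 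Conversely, the grouping step as you describe it is your weakest link: partitioning the $m$ atoms of $u_m$ into blocks is not a Cartesian product, so no chain rule under grouping follows from multiplicativity alone --- in Faddeev's treatment it is a separate axiom. You sense this and pivot to the functional equation, which is the right move. The one genuine gap you share with the paper is the passage from the integrated identity (summed against $p(x)q(y)$ over all product measures) to the pointwise identity $f(ts)=tf(s)+sf(t)$; you correctly flag it as ``the real work'' but do not carry it out, and neither does the paper.

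One place where your version is more careful than the source: you put the restricted additive form on $\log\phi$ rather than on $\phi$ itself. That choice matters. If one insists, as the paper's standing convention literally does, that the multiplicative functional itself be $\phi(p)=\sum_x p(x)g(p(x))$, then the resulting functional equation is the multiplicative one $g(pa)=g(p)g(a)$, whose continuous solutions are $g(x)=x^c$; this yields $\phi(p)=\sum_x p(x)^{1+c}$, a multiplicative normalized functional of the stated form (essentially an exponential of R\'enyi entropy) which is not an exponential of Shannon entropy. For the same reason the paper's displayed implication ``$g(pa)=g(p)g(a)$, so $g=\log_b$'' is a non sequitur: $\log_b$ is additive, not multiplicative. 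Your reformulation --- additivity of $\log\phi$, hence $\tilde g(pa)=\tilde g(p)+\tilde g(a)$, hence $\tilde g=\log_b$ and $\phi=e^{cS}$ --- is the reading under which the proposition is actually true; keep it, but state explicitly at the outset which class of functionals you are quantifying over, as you yourself propose.
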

\begin{proof}
This is essentially theorem 2 in Shannon \cite{Shannon48}.
Lets look at a product of two probability spaces, with measures $(p,q)$ and $(a,b)$. 
The requirement 
$pa g(pa)+pb g(pb) + qa g(qa)+qb g(qb) = (p g(p) + q g(q)) (a g(a) + b g(b))$
implies $g(pa) = g(p) g(a)$ so that $g(x) = \log_b(x)$ for some base $b$. 
\end{proof} 

\paragraph{}
There is still the question about the choice of the base $b$ of the logarithm or the 
choice of the exponential. Now which real number minimizes $-x \log_b(x)$, where $\log_b$ 
is the logarithm to any arbitrary base $b$. The answer to this extremal problem
is $1/e$, independent of $b$. As maximal entropy selects out the base,
lets take this as a base for entropy. We get $g(x)=\log_{1/e}(x) = - \log(x)$, where
$\log$ is of course the natural logarithm. 

\paragraph{}
Having singled out Euler characteristic which is $\sum_{x,y} g(x,y)$, it is natural
to take the energy $U(p) = \sum_{x,y} g(x,y) p(x) p(y)$. 
One could ask why not take $U(p) - e^{T S(p)}$ and suspect that it should not matter much
like in Maupertius principles, where extremizing the length or energy functional leads to
equivalent critical points. We see indeed that the situation remains essentially unchanged
for very small temperatures $T$ but that at high temperature
the bifurcations of the critical points of the Helmholtz functional start much earlier. 
We stick to the standard $U-TS$ functional also because it has proven to be so fundamental
in other domains. 

\paragraph{}
To summarize, we have argued in this appendix that the Helmholtz functional 
$$ F(G,p) = \beta \sum_{x,y} g(x,y) p(x) p(y) + (1-\beta) \sum_x p(x) \log(p(x)) $$
is a natural functional on finite abstract simplicial complexes equipped with a 
probability measure $p$. Whether it is useful to describe some phenomena in 
nature or select interesting geometries by ``placing the complex $G$ into a heat bath,
and then turning the temperature to zero, picking the lowest energy state limit`` 
still needs to be explored. Encouraging are the two main results of this note,
the energy-topology connection $\sum_{x,y} g(x,y) = \chi(G)$ as well as that
the path from $\beta=0$ to $\beta=1$ features catastrophes already for small 
complexes $G$.

\bibliographystyle{plain}

\end{document}